\newtheorem{theorem}{Theorem}
\newtheorem{proposition}[theorem]{Proposition}
\theoremstyle{definition}
\newtheorem{definition}[theorem]{Definition}
\newtheorem{remark}{Remark}
\newtheorem*{theorem*}{Theorem}
\numberwithin{equation}{section} \numberwithin{figure}{section}
\numberwithin{equation}{section}
\DeclareRobustCommand{\rchi}{{\mathpalette\irchi\relax}}
\newcommand{\irchi}[2]{\raisebox{\depth}{$#1\chi$}}
\newcommand{\Z}{\mathbb{Z}}
\newtheorem*{ack}{Acknowledgments}
\author{Manuel Rivera, Felix Wierstra, Mahmoud Zeinalian}
\newcommand{\Addresses}{{
  \bigskip
  \footnotesize

\textsc{Manuel Rivera, Department of Mathematics, University of Miami, 1365 Memorial Drive, Coral
Gables, FL 33146 and Departamento de Matem\'aticas, Cinvestav, Av. Instituto Polit\'ecnico Nacional 2508, Col. San Pedro Zacatenco, M\'exico, D.F. CP 07360, M\'exico}\par\nopagebreak \textit{E-mail address} \texttt{manuelr@math.miami.edu}
  
    \medskip
  \medskip

  \textsc{Felix Wierstra, Max Planck Institute for Mathematics, Vivatsgasse 7, 53111 Bonn, Germany}\par\nopagebreak
  \textit{E-mail address} \texttt{felix.wierstra@gmail.com}

  \medskip
  \medskip
  
  \textsc{Mahmoud Zeinalian, Department of Mathematics, City University of New York, Lehman College, 250 Bedford Park Blvd W, Bronx, NY 10468
   }\par\nopagebreak
  \textit{E-mail address} \texttt{mahmoud.zeinalian@lehman.cuny.edu}

}}
\begin{document}

\begin{abstract}
The normalized singular chains of a path connected pointed space $X$ may be considered as a connected $E_{\infty}$-coalgebra $\mathbf{C}_*(X)$ with the property that the $0^{\text{th}}$ homology of its cobar construction, which is naturally a cocommutative bialgebra, has an antipode, i.e. it is a cocommutative Hopf algebra. We prove that a continuous map of path connected pointed spaces $f: X\to Y$ is a weak homotopy equivalence if and only if $\mathbf{C}_*(f): \mathbf{C}_*(X)\to \mathbf{C}_*(Y)$ is an $\mathbf{\Omega}$-quasi-isomorphism, i.e. a quasi-isomorphism of dg algebras after applying the cobar functor $\mathbf{\Omega}$ to the underlying dg coassociative coalgebras. The proof is based on combining a classical theorem of Whitehead together with the observation that the fundamental group functor and the data of a local system over a space may be described functorially from the algebraic structure of the singular chains.
\end{abstract} 

\title[The functor of singular chains detects weak homotopy equivalences]{The functor of singular chains detects weak homotopy equivalences}
\maketitle
\section{Introduction}

The normalized singular chains of a path connected pointed space has a natural connected $E_{\infty}$-coalgebra structure extending the Alexander-Whitney coproduct. An explicit description of such structure may be found in \cite{McSm02} and \cite{BeFr04} in terms of an $E_{\infty}$-operad called the surjection operad and which is denoted by $\rchi$. The cobar construction of a $\rchi$-coalgebra, defined as the cobar construction of its underlying dg coassociative coalgebra, has a natural coproduct making it a dg bialgebra \cite{Ka03}, \cite{Fr10}. The cobar construction of the $\rchi$-coalgebra of normalized singular chains has a further property: the $0^{\text{th}}$ homology of its cobar construction is a Hopf algebra, i.e. it is a bialgebra with the property of admitting an antipode map (if a bialgebra admits an antipode then it is unique). We denote by $\mathbf{C}_*: Top^0 \to \mathcal{E}^0_{\mathbf{k}}$ the functor of normalized singular chains from the category $Top^0$ of semi-locally simply connected and locally path connected path connected pointed spaces to the category $\mathcal{E}^0_{\mathbf{k}}$ of connected $\rchi$-coalgebras with the property that the $0^{\text{th}}$ cobar homology, which is a bialgebra, has an antipode map making it into a Hopf algebra.
In this article we prove the following statement.
\begin{theorem*}
 A continuous map of path connected pointed spaces $f: X\to Y$ is a weak homotopy equivalence if and only if $\mathbf{C}_*(f): \mathbf{C}_*(X)\to \mathbf{C}_*(Y)$ is an $\mathbf{\Omega}$-quasi-isomorphism, i.e. a quasi-isomorphism of dg algebras after applying the cobar functor $\mathbf{\Omega}$ to the underlying dg coassociative coalgebras.
\end{theorem*}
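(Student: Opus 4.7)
The plan is to reduce the theorem to a classical result of J.H.C.~Whitehead, which asserts that a continuous map $f: X \to Y$ between spaces in $Top^0$ is a weak homotopy equivalence if and only if $\pi_1(f)$ is an isomorphism and $f$ induces an isomorphism on homology with coefficients in every local system on $Y$ (equivalently, the lifted map of universal covers is a homology isomorphism). The task then reduces to showing that an $\mathbf{\Omega}$-quasi-isomorphism between $\mathbf{C}_*(X)$ and $\mathbf{C}_*(Y)$ encodes precisely this homotopical data, functorially in the algebraic structures in play.

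The forward implication should follow from a version of Adams' cobar theorem extended to path connected (not necessarily simply connected) spaces: the dg algebra $\mathbf{\Omega} \mathbf{C}_*(X)$ is quasi-isomorphic to the singular chains on a based loop space model (e.g.\ the Kan loop group of the singular simplicial set), and a weak homotopy equivalence $f: X \to Y$ induces a weak equivalence of such loop space models, hence a quasi-isomorphism after applying $\mathbf{\Omega}$ by functoriality.

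For the reverse implication, the first task is to extract $\pi_1$ algebraically. The Hopf algebra $H_0(\mathbf{\Omega} \mathbf{C}_*(X))$ is canonically isomorphic to the group ring $\Z[\pi_1(X, x_0)]$, and the fundamental group is recovered intrinsically as the set of group-like elements of this cocommutative Hopf algebra. Since an $\mathbf{\Omega}$-quasi-isomorphism of $\rchi$-coalgebras preserves the entire Hopf algebra structure on $H_0$ of the cobar construction (coproduct and antipode included), the induced map $\pi_1(f)$ is an isomorphism. The second task is to recover homology with local coefficients. For a $\Z[\pi_1(X,x_0)]$-module $M$, viewed as a module over $\mathbf{\Omega} \mathbf{C}_*(X)$ via the surjection onto $H_0$, one should construct a two-sided bar/cobar resolution identifying $H_*(X; M)$ with the derived tensor product $M \otimes^{\mathbb{L}}_{\mathbf{\Omega} \mathbf{C}_*(X)} \Z$. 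An $\mathbf{\Omega}$-quasi-isomorphism then induces an isomorphism on these derived tensor products for every local system on $Y$, and together with the isomorphism of fundamental groups this concludes the argument by Whitehead's theorem.

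The main obstacle lies in the second task: establishing functorially and naturally that local system homology is computed by this algebraic derived tensor product over $\mathbf{\Omega} \mathbf{C}_*(X)$. This demands refining Adams-type and twisting cochain machinery to the non-simply-connected setting, and verifying that the identification $H_0(\mathbf{\Omega} \mathbf{C}_*(X)) \cong \Z[\pi_1(X, x_0)]$, together with the resulting module structures, is compatible with the topological construction of local system homology. Once this is in place, invariance under $\mathbf{\Omega}$-quasi-isomorphism is automatic, but the verification of the topological identification for general path connected spaces in $Top^0$ is the technical heart of the argument.
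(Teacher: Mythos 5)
Your proposal follows essentially the same route as the paper: the forward direction via the extended Adams/cobar comparison of \cite{RiZe18}, and the converse by recovering $\pi_1$ as the group-like elements of the Hopf algebra $H_0(\mathbf{\Omega}\mathbf{C}_*(X)) \cong \Z[\pi_1(X)]$, identifying homology with local coefficients with the (quasi-isomorphism invariant) one-sided bar construction $B(\mathbf{\Omega}C^0_*(X),M)$ computing $\mathbf{k}\otimes^{\mathbb{L}}_{\mathbf{\Omega}C^0_*(X)}M$, and concluding with the local-coefficient version of Whitehead's theorem. The ``technical heart'' you flag --- the compatibility of the twisted tensor product/derived tensor product with topological local system homology --- is exactly what the paper resolves by citing Proposition 5.1 of \cite{RiZe17} together with \cite{Br59} and \cite{DaKi01}.
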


The notion of $\mathbf{\Omega}$-quasi-isomorphism is stronger than that of an ordinary quasi-isomorphism. In other words, every $\mathbf{\Omega}$-quasi-isomorphism is a quasi-isomorphism, but not vice versa. 

In \cite{RiZe18} it is shown that $\mathbf{C}_*$ sends weak homotopy equivalences of path connected pointed spaces to $\mathbf{\Omega}$-quasi-isomorphisms. In this paper, after briefly reviewing the relevant background material, we prove the converse. The main ingredients of the proof are as follows: 
\\

(1) Obtaining the fundamental group from the singular chains. The noteworthy phenomenon is that the $\mathbf{\Omega}$-quasi-isomorphism class of $\mathbf{C}_*(X)$ determines the isomorphism class of the fundamental group of the space. This is the content of Section 3. In fact, we do not require the full $E_\infty$-coalgebra structure of the singular chains but only part of it: those structure maps that give rise to a bialgebra structure on the cobar construction of the underlying coassociative dg coalgebra of chains with Alexander-Whitney coproduct. In \cite{GeVo95} it is shown how this bialgebra structure on the cobar construction is coming from the $E_2$-coalgebra structure on the singular chains. In \cite{Ka03} this was extended to the full $E_\infty$-coalgebra structure and it is explained how this bialgebra structure on the cobar construction fits as part of the $E_{\infty}$-coalgebra structure of the chains. 
\\

(2) It was shown in \cite{RiZe17} that the homotopy colimit of an infinity local system of chain complexes over a space can be computed using the twisted tensor product of the chains on the base and the fibre. This chain complex is also quasi-isomorphic to the one sided bar construction of the dg algebra $A= \mathbf{\Omega} C$ (cobar construction on the dg coalgebra of singular chains $C$) and the $A$-module of chains on the fiber. In particular, when the infinity local system is an ordinary classical local system, the homology with local coefficients can be calculated as a one sided bar construction. Since the one sided bar construction is  quasi-isomorphism invariant, it follows that an $\mathbf{\Omega}$-quasi-isomorphism induces an isomorphism of homology with local coefficients.  This is discussed in Section 4.
\\

(3) Whitehead's Theorem says that a map of spaces that induces an isomorphism between fundamental groups and between homology with coefficients in the fundamental group ring, via the left action of the fundamental group on itself, is a weak homotopy equivalence. In Section 5 we review this version of Whitehead's Theorem and in Section 6 we use it to prove our main result. 

\begin{ack} The second and third authors would like to thank the \textit{Max Planck Institute for Mathematics} in Bonn, where they met for the first time and where the seeds of this joint work with the first author were sown. The second author also acknowledges the support of grant GA CR  No. P201/12/G028.
\end{ack}

\section{Preliminaries}

\subsection{Conventions} Throughout the article $\mathbf{k}$ will be a fixed commutative ring with unit.  All tensor products are taken over $\mathbf{k}$. For any group $G$, $\mathbf{k}[G]$ denotes the corresponding group ring. All differentials will have degree $-1$. All differential graded (dg) algebras and coalgebras will be associative and coassociative, respectively. Algebras and coalgebras will be assumed to be unital and counital, respectively. A bialgebra is a $\mathbf{k}$-module $H$ equipped with a product $\mu: H \otimes H\to H$, a coproduct $\nabla: H \to H \otimes H$, a unit $\eta: \mathbf{k} \to H$ and a counit $\epsilon: H \to \mathbf{k}$ such that $\nabla$ is an algebra map. By a Hopf algebra $H$ we mean a bialgebra that admits an antipode map, namely, a map $s: H \to H$ such that $\mu \circ (s \otimes \text{id}_H) \circ \nabla = \eta \circ \epsilon = \mu \circ (\text{id}_H \otimes s) \circ \nabla$. 

\subsection{Simplicial sets and normalized chains} Let $Set$ be the category of sets and let $\mathbf{\Delta}$ be the category of non-empty finite ordinals $[0], [1], [2], ...$ with order-preserving maps. Denote by $Set_{\mathbf{\Delta}} = \text{Fun}(\mathbf{\Delta}^{op}, Set)$ the category of simplicial sets and by $dgCoalg_{\mathbf{k}}$ the category of dg coalgebras over $\mathbf{k}$. Recall the definition of the dg coalgebra of simplicial chains $(C'_*(S), \partial, \Delta)$. Let $C'_n(S)$ be the free $\mathbf{k}$-module generated by the set of $n$-simplices $S_n$ and let $\partial: C'_n(S) \to C'_{n-1}(S)$ be the map given by $\partial(\sigma)= \sum_{i=0}^n (-1)^iS(d_i)(\sigma)$, where $d_i: [n-1] \to [n]$ is the $i^{\text{th}}$ coface map in the category $\mathbf{\Delta}$. It follows that $(C'_*(S), \partial)$ is a chain complex. The Alexander-Whitney coproduct $\Delta: C'_n(S) \to \bigoplus_{p+q=n} C'_i(S) \otimes C'_j(S)$ is defined by the formula
$$\Delta(\sigma)= \bigoplus_{p+q=n} S(f_p)(\sigma) \otimes S(l_q)(\sigma),$$
where $f_p: [p] \to [p+q]$ is given by $f_p(i)=i$ and $l_q: [q] \to [p+q]$ is given by $l_q(i)=p+i$. The triple $(C'_*(S), \partial, \Delta)$ is a dg coalgebra, namely, $\Delta$ is coassociative and compatible with the differential $\partial$. The sub graded $\mathbf{k}$-module $D_*(S) \subseteq C'_*(S)$ generated by the degenerate simplices forms a subcomplex of $(C'_*(S), \partial)$. Denote the quotient complex by $C_*(S) := C'_*(S)/D_*(S)$. The coproduct $\Delta$ induces a well defined chain map $\Delta: C_*(S) \to C_*(S) \otimes C_*(S)$ and therefore $(C_*(S), \partial, \Delta)$ defines a dg coalgebra. This construction is natural with respect to maps of simplicial sets and consequently gives rise to a functor 
$$C_*: Set_{\mathbf{\Delta}} \to dgCoalg_{\mathbf{k}}$$
called the normalized simplicial chains functor. If $S$ is a simplicial set such that $S_0=\{x\}$ then $(C_*(S), \partial, \Delta)$ is a counital connected dg coalgebra, i.e. $C_0(S) \cong \mathbf{k}$ and the counit is given by the projection onto $C_0$. Therefore, the functor $C_*$ restricts to a functor
$$C^0_*: Set^0_{\mathbf{\Delta}} \to dgCoalg^0_{\mathbf{k}},$$
where $Set^0_{\mathbf{\Delta}}$ denotes the category of simplicial sets with a single $0$-simplex and $dgCoalg^0_{\mathbf{k}}$ the category of connected dg coalgebras. 

Let $Top$ be the category of semi-locally simply connected, locally path connected topological spaces. It is a classical result that every object in $Top$ has a universal cover. Denote by  $$\text{Sing}: Top \to Set_{\mathbf{\Delta}}$$ the functor that sends a space to the Kan complex of singular simplices. For simplicity, we will denote the composition of functors $C \circ \text{Sing}$ again by $C$, so if $X$ is a topological space $C_*(X)$ denotes the dg coalgebra of normalized singular chains on $X$. Now let $Top^0$ be the category of semi-locally simply connected, locally path connected, path connected pointed spaces. Throughout this article, by the word space we mean a semi-locally simply connected, locally path connected topological space.  

Given any $(X,x) \in Top^0$ define $\text{Sing}^0(X,x) \in Set^0_{\mathbf{\Delta}}$ to be the sub simplicial set of $\text{Sing}(X)$ whose set of $n$-simplices $\text{Sing}^0(X,x)_n$ consists of all continuous maps $\sigma: \Delta^n \to X$ that send all the vertices of $\Delta^n$ to the base point $x \in X$.  This gives rise to a functor 
$$\text{Sing}^0: Top^0 \to Set^0_{\mathbf{\Delta}}.$$ Again, for simplicity, we will denote $C^0_* \circ \text{Sing}^0$ by $C^0_*$, so if $(X,x)$ is a path connected pointed space $C^0_*(X,x)$ will denote the connected dg coalgebra of normalized singular chains with vertices at $x$.

\begin{remark}
For any path connected pointed space $(X,x)\in Top^0$ the inclusion map $\iota: \text{Sing}^0_*(X) \hookrightarrow \text{Sing}_*(X)$ is a weak equivalence of simplicial sets. Furthermore, since both $\text{Sing}^0_*(X)$ and $\text{Sing}_*(X)$ are Kan complexes, it follows from Proposition 17.2.8 of \cite{Rie14} that $\iota$ is a categorical equivalence of quasi-categories. See also Section 8 of \cite{RiZe16} for more details.
\end{remark}

\subsection{The $E_{\infty}$-coalgebra structure on the normalized chains}

For any simplicial set $S$ the Alexan\-der-Whitney coproduct $\Delta: C_*(S) \to C_*(S) \otimes C_*(S)$ is cocommutative up to an infinite hierarchy of coherent homotopies. This observation goes back to Steenrod and it has been made explicit in several articles in the literature by using the notion of $E_{\infty}$-coalgebras. We follow \cite{McSm02} and \cite{BeFr04} where a coaction of an $E_{\infty}$-operad $\rchi$ called the surjection operad is constructed on the normalized chains $C_*(S)$ extending the Alexander-Whitney dg coassociative coalgebra structure. 

 The surjection operad $\rchi$ is a dg operad defined as the sequence $\{ \rchi(1), \rchi(2), .... \}$ of dg $\mathbf{k}$-modules such that $\rchi(r)_d$ is generated by surjections of sets $u: \{1,...,r+d\} \to \{1,...,r\}$ which satisfy $u(i) \neq u(i+1)$. For the definition of the differential $\rchi(r)_d \to \rchi(r)_{d-1}$  we refer to \cite{BeFr04}. The surjection operad can be given a filtration $F_1\rchi \subset F_2\rchi \subset ... \subset \rchi$ where $F_n\rchi$ is quasi-isomorphic to the normalized singular chains on the (topological) little $n$-cubes operad; in particular, $F_1\rchi$ is the associative operad. Thus for any $\rchi$-(co)algebra $\mathbf{C}$ there is an underlying dg (co)associative (co)algebra $C$. The surjection operad $\rchi$ is an $E_{\infty}$-operad, this means that $\rchi$ is quasi-isomorphic to the commutative operad and the action of the symmetric group $\Sigma_r$ on each $\rchi(r)$ yields a projective $\Sigma_r$-module. The details of the following classical theorem are spelled out in Section 2 of  \cite{BeFr04}.
 
 \begin{theorem} \label{BeFr} \cite{BeFr04} Let $S$ be a simplicial set. There are natural chain maps $\rchi(r) \otimes C_*(S) \to C_*(S)^{\otimes r}$ which make the normalized chains $C_*(S)$ into a coalgebra over the surjection operad $\rchi$ such that the operation $u_0: C_*(S) \to C_*(S) \otimes C_*(S)$ associated to the identity surjection $(u_0: \{1,2\} \to\{1, 2\}) \in \rchi(2)_0$ is exactly the Alexander-Whitney coproduct. 
 \end{theorem}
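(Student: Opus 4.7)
The plan is to write down the explicit ``interval-cut'' formula for the coaction $\rchi(r) \otimes C_*(S) \to C_*(S)^{\otimes r}$ due to McClure--Smith and Berger--Fresse, and then verify in turn that it is natural in $S$, is a chain map of the correct total degree, is $\Sigma_r$-equivariant, respects operadic composition $\circ_i$, and specializes to the Alexander--Whitney coproduct when $u = u_0$.

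First I define the operations on generators. Let $\sigma \in S_n$ be a non-degenerate simplex and let $u: \{1, \ldots, r+d\} \to \{1, \ldots, r\}$ represent a generator of $\rchi(r)_d$. I consider all weakly increasing sequences $0 = j_0 \leq j_1 \leq \cdots \leq j_{r+d} = n$ of vertices of $\Delta^n$. Each such sequence subdivides $[0,n]$ into $r+d$ consecutive (possibly collapsed) segments $[j_{k-1}, j_k]$. For each target $i \in \{1, \ldots, r\}$, I concatenate the segments indexed by $u^{-1}(i)$ in their natural order along their common endpoints to obtain a simplex of $S$ of dimension $\sum_{k \in u^{-1}(i)} (j_k - j_{k-1})$; then I take the tensor product of these $r$ simplices across $i$. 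The value of $u(\sigma)$ is the signed sum over all such sequences, with a Koszul sign determined by the permutation of segments induced by $u$. Naturality in $S$ is immediate because the formula uses only simplicial face operators.

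Next I verify the operad axioms and the boundary condition. For the chain-map property one must show
\[
\partial\bigl(u(\sigma)\bigr) \;=\; u(\partial \sigma) \;\pm\; (\partial u)(\sigma),
\]
where $\partial u$ is the operad differential on $\rchi$. The left-hand side expands as an alternating sum over face maps applied to each tensor factor and, after regrouping, splits into two contributions: ``external'' faces on a segment of positive length, which reassemble into $u(\partial \sigma)$; and ``internal'' faces that collapse one of the cut points $j_k$, which match term-by-term the action of the operadic boundary $\partial u$ acting on $\sigma$. For operadic composition $u \circ_i v$ with $v \in \rchi(s)$, the compatibility reduces to the observation that cutting $\sigma$ according to $u$ and then applying $v$ to the $i$-th tensor factor produces the same partitioned sum as cutting $\sigma$ once according to the composite surjection $u \circ_i v \in \rchi(r+s-1)$. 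The $\Sigma_r$-action permutes the labels of $u$, which permutes the preimages $u^{-1}(i)$, matching the action on $C_*(S)^{\otimes r}$ by permutation of tensor factors. Finally, for $u_0 : \{1,2\} \to \{1,2\}$ we have $r + d = 2$ with a single interior cut $0 \leq j_1 \leq n$, producing exactly $\sum_{p+q = n} S(f_p)(\sigma) \otimes S(l_q)(\sigma)$.

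The hardest part is the sign bookkeeping in the chain-map and composition verifications: one must show that the Koszul signs attached to each tensor term match those produced by the caesura/non-caesura description of $\partial u$ in \cite{BeFr04} and those arising in operad composition. The descent from simplicial to normalized chains is then a formal check: a tensor term involves a degenerate simplex in at least one factor precisely when certain segment patterns appear, and collecting such terms with matching signs shows that $u$ passes to the quotient $C_*(S) = C'_*(S)/D_*(S)$; the constraint $u(i) \neq u(i+1)$ ensures that this descent is compatible with operadic composition. All remaining checks, including the action of the unit of $\rchi(1)$ and naturality of the whole structure in simplicial maps, are routine diagram chases.
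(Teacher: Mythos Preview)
The paper does not give a proof of this theorem at all: it is stated as a result of Berger--Fresse with the citation \cite{BeFr04} attached and the reader is referred to Section~2 of that paper for the details. So there is no ``paper's own proof'' to compare against beyond the reference.

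Your sketch is a faithful outline of the argument in \cite{BeFr04}: the interval-cut formula, the splitting of $\partial(u(\sigma))$ into external faces (giving $u(\partial\sigma)$) and internal/caesura faces (giving $(\partial u)(\sigma)$), compatibility with $\circ_i$ via iterated cutting, $\Sigma_r$-equivariance, and the specialization to Alexander--Whitney for $u_0$. The one place where your outline is genuinely incomplete is exactly where you flag it: the sign verification for the chain-map identity and for operadic composition is the actual content of the proof in \cite{BeFr04}, and you have only described what needs to be checked rather than checked it. If this were meant as a self-contained proof rather than a summary of the cited one, that gap would have to be filled; as a summary it is accurate.
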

 
It follows that the normalized chains functor $C_*: Set_{\mathbf{\Delta}} \to dgCoalg_{\mathbf{k}}$ lifts to a functor $Set_{\mathbf{\Delta}} \to \rchi \text{-}Coalg$, where $\rchi\text{-}Coalg$ denotes the category of $\rchi$-coalgebras. We will denote by
$$\mathbf{C}_*: Top^0 \to \rchi\text{-}Coalg^0$$
the functor that sends a path connected pointed space $(X,x)$ to the connected $\rchi$-coalgebra $\mathbf{C}_*(X,x)$ given by the $\rchi$-coaction provided by Theorem \ref{BeFr} on the normalized chains of the simplicial set $\text{Sing}^0(X,x) \in Set_{\mathbf{\Delta}}^0$. Thus, using the notation introduced previously, the underlying dg coalgebra of $\mathbf{C}_*(X,x)$ is $C^0_*(X,x)$. We will omit the base point and simply write $\mathbf{C}_*(X)=\mathbf{C}_*(X,x)$ and $C^0_*(X)$ for its underlying dg coalgebra. In general, we will omit the base point from the notation when working with objects in $Top^0$.

\subsection{Cobar functor}\label{Subseccobar} Recall the definition of the cobar functor
$$\mathbf{\Omega}: dgCoalg^0_{\mathbf{k}} \to dgAlg_{\mathbf{k}}$$
where $dgAlg_{\mathbf{k}}$ denotes the category of unital dg algebras over $\mathbf{k}$.  For any connected dg coalgebra $(C, \partial :C \to C, \Delta:C\to C \otimes C)$ define a dg algebra
$$\mathbf{\Omega} C := ( T(s^{-1}  \tilde{C} ), D, \mu)$$
where $\tilde{C}_0:=0$ and $\tilde{C}_i:=C_i$ for $i>0$, $s^{-1}$ is the shift by $-1$ functor, $T(s^{-1} \tilde{C})= \bigoplus_{i=0}^{\infty} (s^{-1}\tilde{C})^{\otimes i}$ the tensor algebra with product $\mu$ given by concatenation of monomials, and the differential $D$ is defined as the unique extension of the linear map $$- s^{-1} \circ \partial \circ s^{+1} + (s^{-1} \otimes s^{-1}) \circ \Delta \circ s^{+1}: s^{-1}\tilde{C} \to s^{-1}\tilde{C} \oplus (s^{-1}\tilde{C} \otimes s^{-1}\tilde{C})$$ to a derivation. By doing this we obtain a map $D: T(s^{-1} \tilde{C}) \to T(s^{-1} \tilde{C})$. The coassociativity of $\Delta$ and the fact that $\partial^2 =0$ together imply that $D^2=0$. The unit is given by the inclusion $\mathbf{k} =  (s^{-1}\tilde{C})^{\otimes 0} \hookrightarrow T(s^{-1} \tilde{C})$. This construction is clearly functorial. The cobar functor may be defined more generally as a functor from coaugmented dg coalgebras to augmented dg coalgebras. 

\begin{definition} A map of connected dg coalgebras $f: C \to C'$ is called an $\mathbf{\Omega}$-\textit{quasi-isomorphism} if $\mathbf{\Omega}f: \mathbf{\Omega} C \to \mathbf{\Omega} C'$ is a quasi-isomorphism of dg algebras, i.e. if $\mathbf{\Omega}f$ induces an isomorphism on homology. 
\end{definition}

\begin{remark} Any $\mathbf{\Omega}$-quasi-isomorphism is a quasi-isomorphism of dg coalgebras but not vice versa. However, any quasi-isomorphism between simply connected dg coalgebras is an $\mathbf{\Omega}$-quasi-isomorphism. Recall that a dg coalgebra $C$ is said to be simply connected if $C_0 \cong \mathbf{k}$ and $C_1= 0$. For more details and a counterexample see section 2.4 in \cite{LoVa12}.
\end{remark}

\begin{definition} Let $\mathbf{C}$ and $\mathbf{C}'$ be connected $\rchi$-coalgebras with underlying connected dg coassociative coalgebras $C$ and $C'$, respectively. A map of connected $\rchi$-coalgebras $f: \mathbf{C} \to \mathbf{C'}$ is called an $\mathbf{\Omega}$-\textit{quasi-isomorphism} if the underlying map $f: C \to C'$ is an $\mathbf{\Omega}$-quasi-isomorphism in the sense of the previous definition. 
\end{definition}

\section{$\mathbf{\Omega}$-quasi-isomorphisms and the fundamental group}

In \cite{RiZe18} it was shown that the fundamental group of a path connected pointed space $X$ may be recovered from $\mathbf{C}_*(X)$. In this section we recall the precise statement and use it to show that if $f: X \to Y$ is a continuous map of path connected pointed spaces such that $\mathbf{C}_*(f): \mathbf{C}_*(X) \to \mathbf{C}_*(Y)$ is an $\mathbf{\Omega}$-quasi-isomorphism then $\pi_1(f): \pi_1(X) \to \pi_1(Y)$ is an isomorphism of groups. 

Let $X$ be a path connected topological space with a fixed base point $x \in X$ and denote by $\Omega X$ the topological monoid of (Moore) loops $\{\gamma: [0, r] \to X: r \in \mathbb{R}_{\geq 0}, \gamma(0)=\gamma(r)=x\}$ with the compact-open topology and associative product $\Omega X \times \Omega X \to \Omega X$ given by concatenating loops. Let $C_*(\Omega X)$ denote the dg algebra of normalized singular chains on $\Omega X$ with product induced by the product of $\Omega X$. Define $C^1_*(X)$ as the quotient dg coalgebra obtained by modding out the degenerate chains of the dg coalgebra generated by continuous maps $\sigma: \Delta^n \to X$ that send the $1$-skeleton of $\Delta^n$ to the base point $x \in X$ equipped with the usual boundary operator and coproduct given by the Alexander-Whitney diagonal approximation. Note that $C^1_0(X) \cong \mathbf{k}$ and $C^1_1(X) =0$, i.e. $C^1_*(X)$ is a simply connected dg coalgebra. We recall a classical result of Adams in which the cobar construction of $C^1_*(X)$ is compared to the dg algebra of chains on the based loop space of $X$.

\begin{theorem} \cite{Ad52} There exists a map of dg algebras $\theta: \mathbf{\Omega}C^1_*(X) \to C_*(\Omega X)$ which is a quasi-isomorphism if $\pi_1(X)=0$.
\end{theorem}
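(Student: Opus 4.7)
First I would construct $\theta$ explicitly as a multiplicative extension of a chain-level ``loop'' adjunction. Given a singular simplex $\sigma: \Delta^n \to X$ whose entire $1$-skeleton maps to $x$, one can view $\sigma$ as a map of pairs $(\Delta^n, \mathrm{sk}_1 \Delta^n) \to (X,x)$. Using a standard ``simplex as family of paths'' parametrization --- interpret an interior point of $\Delta^n$ as determining a piecewise-linear path from $v_0$ to $v_n$ through the other vertices, with the remaining $n-1$ parameters in $\Delta^{n-1}$ controlling the intermediate stopping times --- one obtains a continuous map $\Delta^{n-1} \to \Omega X$ and hence a class $\theta(s^{-1}\sigma) \in C_{n-1}(\Omega X)$. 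Extending multiplicatively on tensor products of desuspended simplices defines the degree-preserving algebra map $\theta$ on all of $\mathbf{\Omega}C^1_*(X)$.

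Verifying that $\theta$ is a chain map is then a direct combinatorial check. The internal piece $-s^{-1}\partial s^{+1}$ of the cobar differential corresponds to the boundary of the parametrizing cube arising from proper faces of $\Delta^n$ that do not collapse to the initial or terminal vertex, while the coproduct piece $(s^{-1}\otimes s^{-1})\Delta s^{+1}$ (dual to Alexander--Whitney) corresponds to the boundary components where the broken-line path is forced to visit an intermediate vertex. Since the $1$-skeleton maps to $x$, such a degeneration splits the path as a genuine concatenation of two loops at $x$, matching the product in $C_*(\Omega X)$.

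The substantial content is the quasi-isomorphism statement when $\pi_1(X)=0$. The approach I would pursue is to compare spectral sequences on both sides of $\theta$. On the topological side, the path-loop fibration $\Omega X \to PX \to X$ together with contractibility of $PX$ and simple connectivity of $X$ yields a convergent Eilenberg--Moore spectral sequence with $E_2$-page $\mathrm{Cotor}^{H_*(X)}(\mathbf{k},\mathbf{k})$ abutting to $H_*(\Omega X)$. On the algebraic side, the word-length filtration on $\mathbf{\Omega}C^1_*(X)$ also produces a spectral sequence whose $E_2$-page is naturally identified with the same $\mathrm{Cotor}$. Upgrading $\theta$ to a filtration-preserving map and checking that the induced map on $E_2$ is the identity would then finish the argument via the comparison theorem.

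The main obstacle is precisely the convergence/comparison step, and this is where the hypothesis $\pi_1(X)=0$ is indispensable: without $C^1_1(X)=0$ the word-length filtration on $\mathbf{\Omega}C^1_*(X)$ is not bounded below in each internal degree, and the Eilenberg--Moore spectral sequence need not converge to the correct target. A more hands-on alternative is an inductive CW argument: check $\theta$ directly on spheres $S^n$ with $n\geq 2$, where $\mathbf{\Omega}C^1_*(S^n)$ is a free graded algebra on a single generator and $H_*(\Omega S^n)$ is classically understood via the James construction, and then propagate the result along cell attachments using Mayer--Vietoris-type long exact sequences on both the topological and cobar sides.
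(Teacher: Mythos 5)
The paper does not prove this statement: it is quoted verbatim from Adams \cite{Ad52} as classical input, so there is no internal proof to compare against. Your outline is essentially a reconstruction of Adams' original argument --- a geometric family of broken-line paths in $\Delta^n$ defining $\theta$ on generators, extended multiplicatively, followed by a filtration/spectral-sequence comparison against the path--loop fibration --- so in spirit you are on the right track.

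One step as written would fail, though. You parametrize the family of paths associated to $\sigma:\Delta^n\to X$ by the simplex $\Delta^{n-1}$, producing a simplicial chain in $C_{n-1}(\Omega X)$. The correct parametrizing object is the cube $I^{n-1}$: Adams' maps are $\theta_n: I^{n-1}\to P(\Delta^n)$, and this is forced by the very verification you describe next. The boundary of $I^{n-1}$ decomposes into faces of two kinds, some identified with $I^{n-2}$ (matching the internal differential $-s^{-1}\partial s^{+1}$ coming from faces of $\Delta^n$) and some identified with \emph{products} $I^{p-1}\times I^{q-1}$ (matching the Alexander--Whitney term $(s^{-1}\otimes s^{-1})\Delta s^{+1}$ via concatenation of loops). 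The boundary of $\Delta^{n-1}$ admits no such product decomposition, so with your parametrization the coproduct term of the cobar differential cannot be matched to the product in $C_*(\Omega X)$ and $\theta$ is not a chain map. This is exactly why the paper's remark states that $\theta$ factors through the normalized singular \emph{cubical} chains on $\Omega X$; one then passes to simplicial chains by the standard cubical--simplicial comparison. With that correction, your Eilenberg--Moore comparison (where $\pi_1(X)=0$, equivalently $C^1_1(X)=0$, guarantees convergence of the word-length filtration) is the standard route; the Mayer--Vietoris/cell-attachment alternative you mention is much less routine, since the cobar construction does not obviously satisfy excision, and should not be presented as an equally viable fallback without substantial further work.
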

\begin{remark} The dg algebra map $\theta: \mathbf{\Omega}C^1_*(X) \to C_*(\Omega X)$ factors through the normalized singular cubical chains on $\Omega X$.
\end{remark}

The first and third author proved in Corollary 9.2 of \cite{RiZe16} an extension of Adams' Theorem to path connected, possibly non-simply connected, spaces. The result was reformulated in Theorem 1 of \cite{RiZe18} from a slightly different perspective. We restate it using the notation introduced in the preliminaries section and in a form which will be relevant for this article.

\begin{theorem} \cite{RiZe18} \label{thm1} 
The functor $C^0: Top^0 \to dgCoalg^0_{\mathbf{k}}$ satisfies the following properties:
\\
1) $C^0$ sends weak homotopy equivalences of spaces to $\mathbf{\Omega}$-quasi-isomorphisms of dg coalgebras, and
\\
2) for any $X \in Top^0$ there is a quasi-isomorphism of dg algebras $\mathbf{\Omega} C^0_*(X) \simeq C_*(\Omega X)$. 
\end{theorem}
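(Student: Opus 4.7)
The plan is to prove (2) directly and derive (1) as a formal consequence. First I would construct an explicit natural chain map $\theta_X: \mathbf{\Omega} C^0_*(X) \to C_*(\Omega X)$ extending Adams' classical comparison, and show it is a quasi-isomorphism for every path connected pointed space. Granting this, part (1) is immediate: a weak homotopy equivalence $f: X \to Y$ induces a weak equivalence $\Omega f: \Omega X \to \Omega Y$ of based loop spaces (since $\pi_n(\Omega X) \cong \pi_{n+1}(X)$), hence a quasi-isomorphism $C_*(\Omega f)$; naturality of $\theta$ together with (2) and the two-out-of-three property then forces $\mathbf{\Omega} C^0_*(f)$ to be a quasi-isomorphism of dg algebras, which is the definition of $C^0_*(f)$ being an $\mathbf{\Omega}$-quasi-isomorphism.

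The construction of $\theta_X$ follows Adams' cubical blueprint. A generator $s^{-1}\sigma_1 \otimes \cdots \otimes s^{-1}\sigma_k$ of $\mathbf{\Omega} C^0_*(X)$ consists of singular simplices $\sigma_i: \Delta^{n_i} \to X$ sending all vertices to $x$. To each $\sigma_i$ I would assign a singular cube $\theta(\sigma_i): I^{n_i-1} \to \Omega X$ by the standard Adams procedure, interpolating between linear paths from the initial to the terminal vertex of $\Delta^{n_i}$ and reinterpreting each such path as a Moore loop based at $x$. Concatenation of cubes models the product in $\mathbf{\Omega} C^0_*(X)$, and the composite cube is projected into $C_*(\Omega X)$ through the standard equivalence between normalized cubical and simplicial singular chains. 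Compatibility of $\theta$ with the cobar differential reduces to the familiar observation that the boundary of the Adams cube $I^{n_i-1}$ splits into pieces corresponding to $\partial \sigma_i$ and to the Alexander-Whitney coproduct $\Delta \sigma_i$, which is exactly the combination recorded by the cobar differential.

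To show $\theta_X$ is a quasi-isomorphism I would compare two acyclic twisted tensor product models for the based path space. On the algebraic side, the universal twisting cochain $\tau: C^0_*(X) \to \mathbf{\Omega} C^0_*(X)$ gives an acyclic twisted tensor product $(C^0_*(X) \otimes_\tau \mathbf{\Omega} C^0_*(X), d_\tau)$, by the standard bar-cobar argument for connected dg coalgebras. On the topological side, the based path-loop fibration $\Omega X \to PX \to X$ has contractible total space, and a holonomy-type twisting cochain $\tilde\tau$ promotes $C^0_*(X) \otimes_{\tilde\tau} C_*(\Omega X)$ to a chain model for $C_*(PX)$, which is again acyclic. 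The map $\mathrm{id} \otimes \theta_X$ is designed to intertwine $\tau$ and $\tilde\tau$; filtering both sides by simplicial degree on the base and comparing the associated spectral sequences, a Zeeman-type argument shows that the map of total complexes, which is a quasi-isomorphism on abutments (both equal to $\mathbf{k}$), must already induce an isomorphism along the fibre, forcing $\theta_X$ itself to be a quasi-isomorphism.

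The main obstacle, and the genuinely new input beyond Adams' classical setting, is that the acyclicity arguments above break down when $X$ is not simply connected unless the coalgebra is rich enough to detect $\pi_1(X)$. This is the reason for working with $C^0_*(X)$ rather than $C_*(X)$ or $C^1_*(X)$: restricting to simplices whose vertices all land at the basepoint produces exactly the $1$-simplices that account, in $H_0(\mathbf{\Omega} C^0_*(X))$, for the group ring $\mathbf{k}[\pi_1(X)]$. The technical heart of the proof is to verify that the holonomy twisting cochain $\tilde\tau$ exists globally with the correct local coefficients and is compatible with $\tau$ under $\theta_X$, so that the comparison of twisted tensor products genuinely sees all of $\pi_1(X)$. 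This is precisely the extension of Adams' theorem carried out in \cite{RiZe16}, and once it is in hand both the quasi-isomorphism $\theta_X$ of part (2) and the derived statement (1) follow from the outline above.
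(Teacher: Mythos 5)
This theorem is quoted by the paper from \cite{RiZe16} and \cite{RiZe18} without proof, so there is no internal argument to compare against; I will assess your outline against the proof actually given in those references. Your reduction of (1) to (2) is sound: $\Omega$ preserves weak equivalences of pointed spaces, singular chains send weak equivalences to quasi-isomorphisms, and naturality of $\theta$ plus two-out-of-three does the rest. The construction of $\theta_X$ by Adams' cubical procedure is also how the map is defined in \cite{RiZe16}.

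The gap is in your proof that $\theta_X$ is a quasi-isomorphism, which is the entire content of part (2). The Zeeman-type comparison you invoke --- both twisted tensor products are acyclic and the map on the base is the identity, hence the map on the fibres is a quasi-isomorphism --- is valid only when the local coefficient systems on the base are simple, and that is exactly what fails for non-simply connected $X$; the simply connected case is already Adams' theorem, quoted earlier in the paper with the hypothesis $\pi_1(X)=0$ for precisely this reason. The argument schema proves too much. Take the connected dg coalgebra $C=\Lambda(x)$ with $|x|=1$ and zero differential, so that $\mathbf{\Omega}C=\mathbb{Z}[t]$ with $t=s^{-1}x$ in degree $0$ and zero differential, and consider the algebra map $g:\mathbb{Z}[t]\to\mathbb{Z}[u,u^{-1}]$, $t\mapsto u-1$. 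With the paper's formula for $\partial_\tau$ one computes $\partial_\tau(x\otimes m)=1\otimes tm$ on $C\otimes_{\tau}\mathbb{Z}[t]$ and $\partial(x\otimes m)=1\otimes(u-1)m$ on $C\otimes_{g\circ\tau}\mathbb{Z}[u,u^{-1}]$; both complexes have homology $\mathbb{Z}$ concentrated in degree $0$, the induced map of total complexes is a quasi-isomorphism, and the map on the base is the identity --- yet $g$ is not a quasi-isomorphism. So ``total acyclic and base iso imply fibre iso'' is false over a base that is not simply connected, and no care in constructing the holonomy twisting cochain $\tilde\tau$ repairs the comparison step. The proof in \cite{RiZe16} avoids twisted tensor products altogether: it identifies $\mathbf{\Omega}C^0_*(X)$ isomorphically, as a dg algebra, with the normalized chains on the monoidal cubical set with connections $\mathfrak{C}_{\square_c}(\text{Sing}(X,x))$ modelling the rigidification mapping space $\mathfrak{C}(\text{Sing}(X,x))(x,x)$, and then invokes the Dugger--Spivak and Lurie comparison theorems asserting that for a Kan complex this mapping space is weakly equivalent to the based loop space. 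That categorical input, which genuinely uses all of $\text{Sing}^0(X,x)$ and not just its quasi-isomorphism type, is what makes the non-simply connected case go through, and it is absent from your outline.
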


In particular, it follows from the above statement that there are isomorphisms of algebras $H_0(\mathbf{\Omega} C^0_*(X) ) \cong H_0(\Omega X) \cong \mathbf{k} [ \pi_1(X) ]$. We describe explicitly the algebra isomorphism 
\begin{equation} \label{iso}
\psi: H_0(\mathbf{\Omega} C^0_*(X) )  \xrightarrow{\cong} \mathbf{k} [ \pi_1(X) ].
\end{equation} Note $(\mathbf{\Omega}C^0_*(X))_0$ is generated as a $\mathbf{k}$-module by monomials $[\sigma_1| ... |\sigma_n]$ where each $\sigma_i: \Delta^1 \to X \in C^0_1(X)$ is a loop based at $x \in X$. For any length $1$ monomial $[\sigma] \in (\mathbf{\Omega}C^0_*(X))_0$ we have $$\psi ( [\sigma] ) = \big[ \sigma \big] - e \in \mathbf{k} [\pi_1(X)],$$ where $\big[ \sigma \big] \in \pi_1(X)$ denotes the homotopy class of $\sigma$ and $e$ is the identity element of $\pi_1(X)$, namely the homotopy class of the constant loop at $x \in X$. Set $\psi( 1_{\mathbf{k}} ) = e$ and extend $\psi$ to monomials in $(\mathbf{\Omega}C^0_*(X))_0$ of arbitrary length as an algebra map. The resulting map $(\mathbf{\Omega}C^0_*(X))_0 \to \mathbf{k}[\pi_1(X)]$ induces the isomorphism $\psi: H_0(\mathbf{\Omega}C^0_*(X)) \xrightarrow{\cong}  \mathbf{k}[\pi_1(X)]$. 

The group algebra $\mathbf{k} [ \pi_1(X) ]$ is a cocommutative Hopf algebra when equipped with the coproduct determined by $\nabla(g) = g \otimes g$ for all $ g \in \pi_1(X)$. The antipode map is given by $s(g)= g^{-1}$. The counit $\epsilon: \mathbf{k} [ \pi_1(X) ] \to \mathbf{k}$  is given by defining $\epsilon(g)=1$  for all $g \in \pi_1(X)$. Hence, the isomorphism $H_0(\mathbf{\Omega} C^0_*(X) )  \cong \mathbf{k} [ \pi_1(X) ]$ determines a bialgebra structure on $H_0(\mathbf{\Omega} C^0_*(X) )$ which is in fact a Hopf algebra. The coproduct on $H_0(\mathbf{\Omega} C^0_*(X) )$ is given by the formula $\nabla ([\sigma])= [\sigma] \otimes [\sigma] + [\sigma] \otimes 1 + 1 \otimes [\sigma]$ for any $\sigma \in C^0_1(X)$. 

The bialgebra structure of $H_0(\mathbf{\Omega} C^0_*(X) )$ may be lifted to a dg bialgebra structure on $\mathbf{\Omega} C^0_*(X)$. The coproduct
\begin{equation}\label{Baues}
\nabla: \mathbf{\Omega} C^0_*(X) \to \mathbf{\Omega} C^0_*(X) \otimes \mathbf{\Omega} C^0_*(X)
\end{equation}
is deduced from the following observation:  $\mathbf{\Omega} C^0_*(X)$ is \textit{isomorphic} as a dg algebra to the normalized (cubical) chains on a monoidal cubical set with connections constructed in \cite{RiZe16} and denoted by $\mathfrak{C}_{\square_c}(\text{Sing}(X,x))$.  The complex of normalized chains on any cubical set with connections has a natural coproduct  making it a counital dg coalgebra which models the diagonal map at the level of geometric realizations; such a coproduct is also known as the Serre diagonal. It follows that $\mathbf{\Omega} C^0_*(X)$ becomes a dg bialgebra lifting the bialgebra structure of $H_0(\mathbf{\Omega} C^0_*(X) )$. An explicit formula for $\nabla$ on generators of positive total degree may also be found in \cite{Ba98}. 

Furthermore, it is explained in \cite{Ka03} how this dg bialgebra structure on $\mathbf{\Omega} C^0_*(X)$ may be deduced from certain maps corresponding to the co-action of the surjection operad $\rchi$ on $C^0_*(X)$ provided by Theorem \ref{BeFr} above. Kadeishvili identifies explicitly those surjections (those generators in the dg operad $\rchi$) that determine the coproduct $\nabla: \mathbf{\Omega}C \to \mathbf{\Omega}C \otimes \mathbf{\Omega}C$. We summarize exactly what we need for the purpose of this article in the following statement. 
\begin{proposition} \cite{Ka03}
Let $\mathbf{C}_*$ be a coalgebra over the surjection operad $\rchi$ and denote by $\mathbf{\Omega} C$ the cobar construction of the underlying dg coassociative coalgebra of $\mathbf{C}_*$. There exists a coproduct $\nabla: \mathbf{\Omega} C \to  \mathbf{\Omega} C \otimes  \mathbf{\Omega} C$ which makes $\mathbf{\Omega}C$ into a dg bialgebra and it agrees with the coproduct \ref{Baues} when $\mathbf{C}_*= \mathbf{C}_*(X)$.
\end{proposition}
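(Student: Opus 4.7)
The plan is to construct $\nabla \colon \mathbf{\Omega}C \to \mathbf{\Omega}C \otimes \mathbf{\Omega}C$ as a dg algebra map; by the universal property of the tensor algebra it then suffices to prescribe $\nabla$ on each generator $s^{-1}c$ for $c\in\tilde{C}$ and verify coassociativity together with compatibility with the cobar differential $D$. I would single out a distinguished family of surjections in $\rchi$ whose $\rchi$-coactions on $c$, after composition with suitable shifts $(s^{-1})^{\otimes p}\otimes(s^{-1})^{\otimes q}$, produce terms in $(s^{-1}\tilde{C})^{\otimes p}\otimes(s^{-1}\tilde{C})^{\otimes q}\subset \mathbf{\Omega}C\otimes\mathbf{\Omega}C$. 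The identity surjection $(1,2)\in\rchi(2)_0$ contributes, at $p=q=1$, the shifted Alexander-Whitney term $(s^{-1}\otimes s^{-1})\Delta(c)$, while higher surjections such as $(1,2,1)\in\rchi(2)_1$ and their arity-$r$ analogues contribute the terms that witness cocommutativity up to coherent homotopies; the unit terms $1\otimes s^{-1}c$ and $s^{-1}c\otimes 1$ come from the trivial placement of all outputs on a single side.

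The verification that the resulting map defines a dg bialgebra structure then reduces to identities in the surjection operad. Compatibility with the algebra product is automatic since $\nabla$ is multiplicative by construction, and the counit is the canonical augmentation $\mathbf{\Omega}C \twoheadrightarrow \mathbf{k}$. Coassociativity $(\nabla\otimes\mathrm{id})\circ\nabla = (\mathrm{id}\otimes\nabla)\circ\nabla$ and the chain-map relation $D\circ\nabla = (\nabla\otimes\mathrm{id} \pm \mathrm{id}\otimes\nabla)\circ D$ translate, on a generator $s^{-1}c$, into operadic identities expressing the $\rchi$-boundary of each chosen surjection as a sum of compositions of lower-arity surjections from the same family with the Alexander-Whitney coproduct. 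This combinatorial bookkeeping is the principal obstacle I anticipate; I would verify the first few cases by hand and appeal to \cite{Ka03} for the general recursive pattern.

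Finally, to identify $\nabla$ with the Baues coproduct \ref{Baues} when $\mathbf{C} = \mathbf{C}_*(X)$, I would compare the two dg algebra maps on generators. On a based loop $\sigma \in C^0_1(X)$ the Alexander-Whitney contribution vanishes after reduction to $\tilde{C}\otimes\tilde{C}$, since $\Delta(\sigma) = [x]\otimes \sigma + \sigma\otimes[x]$ involves only the basepoint $0$-simplex, while the first nonzero higher term arises from $(1,2,1)(\sigma) = \sigma\otimes\sigma$, yielding
$$\nabla(s^{-1}\sigma) = 1 \otimes s^{-1}\sigma + s^{-1}\sigma \otimes 1 + s^{-1}\sigma \otimes s^{-1}\sigma.$$
This matches the Serre-diagonal formula on the cubical model $\mathfrak{C}_{\square_c}(\text{Sing}(X,x))$ and corresponds under $\psi$ to the group-like coproduct on $\mathbf{k}[\pi_1(X)]$. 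Agreement on generators $s^{-1}c$ with $c$ of higher simplicial degree follows by induction, using the naturality of both constructions and the fact that any two dg algebra maps into $\mathbf{\Omega}C\otimes\mathbf{\Omega}C$ that agree on generators and are compatible with $D$ must coincide.
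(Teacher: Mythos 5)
First, a point of comparison: the paper does not actually prove this proposition --- it is stated as a summary of what is needed from \cite{Ka03} --- so your attempt is being measured against Kadeishvili's construction rather than an argument in the text. Your overall architecture is the right one (define $\nabla$ on generators $s^{-1}c$ from a distinguished family of surjections, extend multiplicatively, reduce coassociativity and the Leibniz compatibility with $D$ to boundary identities in $\rchi$, and compare with the Baues coproduct on generators), and your degree-one computation $\nabla(s^{-1}\sigma)=1\otimes s^{-1}\sigma+s^{-1}\sigma\otimes 1+s^{-1}\sigma\otimes s^{-1}\sigma$ agrees with the formula the paper records on $H_0$.

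There is, however, a concrete degree error in your identification of the contributing surjections. The component of $\nabla(s^{-1}c)$ lying in $(s^{-1}\tilde{C})^{\otimes p}\otimes(s^{-1}\tilde{C})^{\otimes q}$ must preserve total degree, hence is induced by an operation $C\to C^{\otimes(p+q)}$ of degree $p+q-1$; for $p=q=1$ this is a degree $+1$ operation, i.e.\ an element of $\rchi(2)_1$ such as $(1,2,1)$, and \emph{not} the identity surjection $(1,2)\in\rchi(2)_0$. The shifted Alexander--Whitney term $(s^{-1}\otimes s^{-1})\Delta(c)$ lowers degree by one and is precisely the quadratic part of the cobar differential $D$, not a summand of $\nabla$; including it would make $\nabla$ non-homogeneous. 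Your own example only escapes this because for a $1$-simplex $\Delta(\sigma)$ dies in $\tilde{C}\otimes\tilde{C}$, which fails for $|c|\geq 2$. The correct family consists of Kadeishvili's higher operations (arity $k+1$, degree $k$), producing the components of word-length shape $(1,k)$ and $(k,1)$ that assemble into the Baues/Serre diagonal. Beyond this, the two substantive verifications --- that the $\rchi$-boundaries of these surjections encode exactly coassociativity together with the chain-map relation, and that the resulting coproduct coincides with the one coming from the cubical model --- are exactly the points you defer back to \cite{Ka03}, so the proposal is not an independent proof; since the paper itself only cites \cite{Ka03} here that deferral is defensible, but the degree bookkeeping above must be corrected for the construction to make sense at all.
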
 

\begin{remark} It is shown in \cite{Fr10} that, if the $E_\infty$-cooperad we are working over is cofibrant, then the coproduct $\nabla: \mathbf{\Omega} C \to  \mathbf{\Omega} C \otimes  \mathbf{\Omega} C$ extends to an $E_{\infty}$-coalgebra structure which is compatible with the algebra structure of $\mathbf{\Omega}C$, a structure found in the literature under the name \textit{Hopf $E_{\infty}$-coalgebra}. In particular, this implies that $H_0(\mathbf{\Omega}C)$ is a bialgebra but not say anything about the existence of an antipode making it into a Hopf algebra. When $C$ is the dg coalgebra of singular chains we show additionally that the bialgebra $H_0(\mathbf{\Omega}C)$ has an antipode.
\end{remark}

Let $\mathcal{E}_{\mathbf{k}}^0$ be the full subcategory of $\rchi\text{-}Coalg^0$ whose objects are connected $\rchi$-coalgebras $\mathbf{C}$ which have the property that the bialgebra $H_0(\mathbf{\Omega} C)$ admits an antipode making it a Hopf algebra, where $C$ is the underlying dg coassociative coalgebra of $\mathbf{C}$. Since $H_0(\mathbf{\Omega} C^0_*(X) ) \cong \mathbf{k}[\pi_1(X)]$ and $C^0_*(X)$ is the underlying dg coalgebra of $\mathbf{C}_*(X)$, we may consider $\mathbf{C}_*(X)$ as an object of $\mathcal{E}_{\mathbf{k}}^0$. Therefore we will regard $\mathbf{C}_*: Top^0 \to \rchi\text{-}Coalg^0$ from now on as a functor 
$$\mathbf{C}_*: Top^0 \to \mathcal{E}_{\mathbf{k}}^0.$$
Let $F: \mathcal{E}_{\mathbf{k}}^0 \to dgCoalg^0_{\mathbf{k}}$ be the functor which sends a connected $\rchi$-coalgebra to its underlying connected dg coalgebra. Denote by $\mathcal{H}_{\mathbf{k}}$ the category of (unital, counital) Hopf algebras over $\mathbf{k}$ and by $\mathcal{G}$ the category of groups. Given a Hopf algebra $H$ with coproduct $\nabla: H \to H \otimes H$, antipode $s: H \to H$, unit $\eta: \mathbf{k} \to H$, and counit $\epsilon: H \to \mathbf{k}$, the group-like elements functor 
$$Grp: \mathcal{H}_{\mathbf{k}} \to \mathcal{G}$$
 is defined by
$$Grp (H) :=\{g \in H: \epsilon(g)= \eta(1_{\mathbf{k}}) \text{ and }\nabla(g)= g \otimes g \}.$$

We recall the main result of \cite{RiZe18}:

\begin{theorem} \label{pi1andchains}The functor $\mathbf{C}_*: Top^0 \to \mathcal{E}_{\mathbf{k}}^0$ satisfies
\\
1) $F \circ \mathbf{C}_*= C^0_*$
\\
2) $\mathbf{C}_*$ sends weak homotopy equivalences of pointed spaces to $\mathbf{\Omega}$-quasi-isomorphisms of $\rchi$-coalgbras, and 
\\
3) $Grp \circ H_0\circ \mathbf{\Omega} \circ F \circ \mathbf{C}_*= \pi_1.$
\end{theorem}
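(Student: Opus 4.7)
The plan is to handle items (1) and (2) by direct unwinding of definitions and then devote the real effort to item (3), which is the substantive content. For (1), the functor $\mathbf{C}_*$ is built by equipping $C^0_*(X)$ with the $\rchi$-coaction supplied by Theorem \ref{BeFr}, and $F$ forgets exactly this extra structure, so the equality $F \circ \mathbf{C}_* = C^0_*$ is tautological. For (2), an $\mathbf{\Omega}$-quasi-isomorphism of $\rchi$-coalgebras is declared to be one whose image under $F$ is an $\mathbf{\Omega}$-quasi-isomorphism of dg coalgebras, so (2) is an immediate consequence of (1) combined with Theorem \ref{thm1}(1).

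For (3), the strategy is to promote the algebra isomorphism $\psi$ of (\ref{iso}) to a natural Hopf algebra isomorphism
\[
\psi \colon H_0(\mathbf{\Omega} C^0_*(X)) \xrightarrow{\cong} \mathbf{k}[\pi_1(X)]
\]
and then to identify $Grp(\mathbf{k}[\pi_1(X)]) = \pi_1(X)$. The algebra part is provided by Theorem \ref{thm1}(2), and to upgrade it to a Hopf isomorphism I would appeal to the Kadeishvili proposition preceding this theorem, which guarantees that the coproduct on $\mathbf{\Omega} C^0_*(X)$ induced from the $\rchi$-coaction agrees with the Baues coproduct (\ref{Baues}). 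On $H_0$ this coproduct takes the explicit form $\nabla[\sigma] = [\sigma]\otimes[\sigma] + [\sigma]\otimes 1 + 1\otimes[\sigma]$ on loop generators, and a direct calculation verifies
\[
(\psi \otimes \psi)\nabla[\sigma] = (g-e)\otimes(g-e) + (g-e)\otimes e + e\otimes(g-e) = g\otimes g - e\otimes e = \nabla\psi([\sigma]),
\]
where $g = [\sigma] \in \pi_1(X)$. Since $\psi$ and the two coproducts are algebra maps and the classes $[\sigma]$ generate $H_0(\mathbf{\Omega} C^0_*(X))$ as an algebra, this pointwise compatibility suffices to conclude that $\psi$ is a bialgebra map; the antipode is then forced by uniqueness. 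In particular, this places $\mathbf{C}_*(X)$ in the subcategory $\mathcal{E}^0_{\mathbf{k}}$.

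For the identification $Grp(\mathbf{k}[\pi_1(X)]) = \pi_1(X)$, an elementary expansion $x = \sum_g a_g g$ subject to $\nabla(x) = x \otimes x$ and $\epsilon(x) = 1$ reveals that the coefficients $\{a_g\}$ form a family of orthogonal idempotents summing to $1$; under the standing assumption that $\mathbf{k}$ has no nontrivial idempotents this forces exactly one $a_g$ to equal $1$ and the rest to vanish. Naturality in $X$ of every ingredient --- of $\mathbf{C}_*$, $\mathbf{\Omega}$, $H_0$, $Grp$, and, crucially, of $\psi$, whose defining formula on length-one monomials is manifestly natural --- then upgrades this pointwise identification to an equality of functors $Top^0 \to \mathcal{G}$. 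The main obstacle I foresee is the Hopf-algebra compatibility: confirming that the abstract operadic coproduct of the preceding proposition descends to $H_0$ as the concrete formula above. Once that matching is unwound in degree zero, everything else is routine bookkeeping.
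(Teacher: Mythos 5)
The paper does not actually prove Theorem \ref{pi1andchains}: it is imported wholesale from \cite{RiZe18} (``we have combined Theorems 2 and 3 of \cite{RiZe18} into a single statement''), so there is no in-paper argument to compare against line by line. Your reconstruction is nevertheless faithful to the argument the surrounding text attributes to \cite{RiZe18}: items (1) and (2) are indeed definitional given Theorem \ref{thm1}(1), your computation $(\psi\otimes\psi)\nabla[\sigma]=g\otimes g-e\otimes e=\nabla\psi([\sigma])$ is exactly the compatibility that makes $\psi$ of (\ref{iso}) a bialgebra isomorphism, and the orthogonal-idempotent analysis of group-like elements of $\mathbf{k}[G]$ is the standard way to extract $\pi_1$ at the end. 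Two caveats. First, the step you yourself flag as the main obstacle --- that the Kadeishvili coproduct induced by the $\rchi$-coaction descends on $H_0$ to the explicit formula $\nabla[\sigma]=[\sigma]\otimes[\sigma]+[\sigma]\otimes 1+1\otimes[\sigma]$ --- is not routine bookkeeping; it is precisely the content of the cited results of \cite{Ba98}, \cite{Ka03} and \cite{RiZe16} (the identification of $\mathbf{\Omega}C^0_*(X)$ with cubical chains on a monoidal cubical set and of $\nabla$ with the Serre diagonal), and your proof is incomplete without either carrying out that degree-zero computation or citing it, as the paper does. Second, your ``standing assumption that $\mathbf{k}$ has no nontrivial idempotents'' is not a hypothesis of the theorem as stated, but it is genuinely needed: for $\mathbf{k}=\mathbb{Z}/6$, say, $Grp(\mathbf{k}[G])$ strictly contains $G$, so item (3) fails over a general commutative ring. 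You have in effect identified a hypothesis that the statement suppresses; it is harmless for the paper's main theorem, which is run over $\mathbf{k}=\mathbb{Z}$, but it should be made explicit rather than smuggled in as a ``standing assumption.''
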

Note that in the above theorem we have combined Theorems 2 and 3 of \cite{RiZe18} into a single statement. We may now deduce the main result of this section. 
\begin{proposition} \label{fundgrpiso}
Suppose $f: X \to Y$ is a continuous map between path connected pointed topological spaces. If the induced map $\mathbf{C}_*(f): \mathbf{C}_*(X) \to \mathbf{C}_*(Y)$ is an $\mathbf{\Omega}$-quasi-isomorphism then $\pi_1(f): \pi_1(X) \to \pi_1(Y)$ is an isomorphism of groups.
\end{proposition}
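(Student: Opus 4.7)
The plan is to invoke part (3) of Theorem \ref{pi1andchains}, which identifies the functor $\pi_1$ with the composite $Grp \circ H_0 \circ \mathbf{\Omega} \circ F \circ \mathbf{C}_*$. Since the group-like elements functor $Grp : \mathcal{H}_{\mathbf{k}} \to \mathcal{G}$, being a functor, sends isomorphisms to isomorphisms, it suffices to verify that the morphism $H_0(\mathbf{\Omega} F \mathbf{C}_*(f))$ is an isomorphism in $\mathcal{H}_{\mathbf{k}}$.

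Unpacking the hypothesis, to say that $\mathbf{C}_*(f)$ is an $\mathbf{\Omega}$-quasi-isomorphism of $\rchi$-coalgebras means, by the two definitions in Section 2.4, that the dg algebra map $\mathbf{\Omega} C^0_*(f) = \mathbf{\Omega} F \mathbf{C}_*(f) : \mathbf{\Omega} C^0_*(X) \to \mathbf{\Omega} C^0_*(Y)$ is a quasi-isomorphism. Taking $H_0$ therefore produces an isomorphism of underlying unital $\mathbf{k}$-algebras. By construction, the composite $H_0 \circ \mathbf{\Omega} \circ F \circ \mathbf{C}_*$ takes values in $\mathcal{H}_{\mathbf{k}}$ -- this is precisely the content of having restricted $\mathbf{C}_*$ to land in $\mathcal{E}_{\mathbf{k}}^0$, using Kadeishvili's proposition to produce the bialgebra structure and the defining property of $\mathcal{E}_{\mathbf{k}}^0$ to supply the antipode -- so the map $H_0(\mathbf{\Omega} C^0_*(f))$ is, a priori, a morphism of Hopf algebras. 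A morphism of Hopf algebras whose underlying map of $\mathbf{k}$-modules is invertible is automatically an isomorphism in $\mathcal{H}_{\mathbf{k}}$, because the set-theoretic inverse is forced to preserve the product, coproduct, unit, counit, and (by uniqueness of the antipode) the antipode as well.

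Finally, applying $Grp$ to this Hopf algebra isomorphism yields an isomorphism of groups which, by part (3) of Theorem \ref{pi1andchains}, coincides with $\pi_1(f)$. This completes the argument. There is no genuine obstacle to overcome here: once Theorem \ref{pi1andchains} is in hand, the proof is a short functorial unraveling. The only point worth underlining is that compatibility of $H_0(\mathbf{\Omega} C^0_*(f))$ with the Hopf algebra structures is not an auxiliary verification but is packaged into the hypothesis that $\mathbf{C}_*(f)$ is a morphism of $\rchi$-coalgebras, which via Kadeishvili's proposition forces compatibility of $\mathbf{\Omega} C^0_*(f)$ with the bialgebra coproducts, after which uniqueness of antipodes handles the remaining structure automatically.
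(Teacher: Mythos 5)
Your proposal is correct and follows essentially the same route as the paper: unpack the $\mathbf{\Omega}$-quasi-isomorphism hypothesis to get that $H_0(\mathbf{\Omega} C^0_*(f))$ is an isomorphism of Hopf algebras, then apply the group-like elements functor and identify the result with $\pi_1(f)$ via part (3) of Theorem \ref{pi1andchains}. The only difference is that you spell out why a bijective bialgebra morphism is automatically a Hopf algebra isomorphism (uniqueness of antipodes) and why the coproduct is preserved (Kadeishvili's proposition), details the paper leaves implicit.
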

\begin{proof} By definition, if the morphism $\mathbf{C}_*(f): \mathbf{C}_*(X) \to \mathbf{C}_*(Y)$ in $\mathcal{E}_{\mathbf{k}}^0$ is an $\Omega$-quasi-isomorphism, then $\mathbf{\Omega} C^0_*(f): \mathbf{\Omega} C^0_*(X) \to \mathbf{\Omega} C^0_*(Y)$ is a quasi-isomorphism of dg bialgebras. In particular,
\begin{equation} \label{iso2}
H_0 ( \mathbf{\Omega} C^0_*(f)): H_0 (\mathbf{\Omega} C^0_*(X) )\to H_0(\mathbf{\Omega} C^0_*(Y))
\end{equation}
is an isomorphism of Hopf algebras. Applying the functor $Grp$ to \ref{iso2} we get an isomorphism of groups
\begin{equation} \label{iso3}
Grp (H_0 ( \mathbf{\Omega} C^0_*(f))): Grp(H_0 (\mathbf{\Omega} C^0_*(X) )) \to Grp(H_0(\mathbf{\Omega} C^0_*(Y)))
\end{equation}
It follows from 3) in Theorem \ref{pi1andchains} and from the explicit description of the isomorphism \ref{iso} that the map \ref{iso3} is exactly $\pi_1(f): \pi_1(X) \to \pi_1(Y)$. 
\end{proof}

\section{$\mathbf{\Omega}$-quasi-isomorphisms and local coefficients}
Let $f: X \to Y$ be a continuous map of path connected pointed spaces such that $\mathbf{C}_*(f): \mathbf{C}_*(X) \to \mathbf{C}_*(Y)$ is an $\mathbf{\Omega}$-quasi-isomorphism. In the previous section we argued that $f$ induces an isomorphism $\pi_1(X) \cong \pi_1(Y)= \pi$. In this section we show that if $M$ is a left $\mathbf{k}[\pi]$-module then the induced map $C_*(f): C_*(X; M) \to C_*(Y; M)$ between singular chains with local coefficients is a quasi-isomorphism. We prove this by using the observation that $C_*(X; M)$ is quasi-isomorphic to the bar construction $B(\mathbf{\Omega} C^0_*(X), M)$. 

We define the chain complex $C_*(X; M)$ as a twisted tensor product \cite{Br59}. We recall the construction. Let  $(C, \partial_C, \Delta_C)$ be a dg coalgebra and consider the universal twisting cochain $\tau: C \rightarrowtail \tilde{C} \cong s^{-1} \tilde{C} \hookrightarrow \mathbf{\Omega} C$, where $\tilde{C}$ is defined in Section \ref{Subseccobar}. Note that $\tau$ is a linear map of degree $-1$ satisfying the Maurer-Cartan equation $d \tau + \tau \star \tau=0$ in the convolution dg algebra $(\text{Hom}(C, \mathbf{\Omega} C), d, \star)$. Given any left dg module $(M, \partial_M)$ over $\mathbf{\Omega} C$ with action denoted by $m: \mathbf{\Omega}C \otimes M \to M$ define a differential
$\partial_{\tau}: C \otimes M \to C \otimes M$
by
$$ \partial_{\tau} = \partial_C \otimes \text{id}_M + \text{id}_C \otimes \partial_M + ( \text{id} \otimes m) \circ (\text{id}_C \otimes \tau \otimes \text{id}_M) \circ (\Delta_C \otimes \text{id}_M)$$
It follows that $\partial_{\tau}^2=0$. The chain complex  $C \otimes_{\tau} M = (C \otimes M , \partial_{\tau})$ is called the twisted tensor product of $C$ and $M$ via $\tau$.  

For $X \in Top^0$, any left $\mathbf{k}[\pi_1(X)]$-module $M$ becomes a left dg $\mathbf{\Omega} C^0_*(X)$-module by regarding $M$ as a chain complex concentrated in degree $0$ with $0$ differential and defining an action $\mathbf{\Omega} C^0_*(X) \otimes M \to M$ via the projection map
\begin{equation} \label{rho}
q: \mathbf{\Omega} C^0_*(X) \rightarrowtail H_0(\mathbf{\Omega} C^0_*(X))\cong \mathbf{k}[\pi_1(X)].
\end{equation}
Define the chain complex $C_*(X;M)= C^0_*(X) \otimes_{\tau} M$ and call its homology $H_*(X;M)$ the homology of $X$ with local coefficients in $M$. The equivalence of our definition and the more classical definition as the homology of the ordinary tensor product $C_*(\tilde{X}) \otimes_{ \mathbf{k}[\pi_1(X)] } M$, where $\tilde{X}$ is the universal cover of $X$, follows from Theorem 5.8 of \cite{DaKi01} or may be deduced from the (more general) main result of \cite{Br59}. 

We now relate $C_*(X;M)$ to the bar construction. For any dg algebra $A$ with augmentation $\epsilon: A \to \mathbf{k}$ define a conilpotent dg coalgebra $(BA, D_{BA}, \Delta)$ as follows. 
\begin{equation*}
BA= \mathbf{k}1_{\mathbf{k}} \oplus \left(s \overline{A}\,\right) \oplus \left(s \overline{A}\,\right)^{\otimes 2} \oplus \left(s \overline{A}\,\right)^{\otimes 3} \oplus \cdots ,
\end{equation*}
where $\overline{A}= \text{ker} \epsilon$,  $s$ denotes the shift by $+1$ functor. We write monomials in $BA$ by $\{a_1 | ... | a_k\}$ where $a_i \in s\overline{A}$. Set $D_{BA}= -d_1 + d_2$ where
$$d_{1}\{ a_{1}|...| a_{n} \}   =\sum_{i=1}^{n} (-1)^{\epsilon_{i-1}}
\{ a_{1}|...|\partial_{A}a_{i}|...| a_{n}\},$$
$$d_{2} \{  a_{1}|...| a_{n}\}= \sum_{i=1}^{n-1} (-1)^{\epsilon_{i}}
\{ a_{1}|...| a_{i} a_{i+1} |...| a_{n}\},$$
where $\epsilon_i= |a_1| + ... |a_i| - i+1$. The associativity of the product in $A$ and the fact $\partial_A^2=0$ imply that $D_{BA}^2=0$. The coproduct $\Delta: BA \to BA \otimes BA$ is given by 
$$\Delta( \{ a_1 | ... | a_n \} ) = 1_{\mathbf{k}}\otimes \{a_1 |...|a_n\}+\{a_1|...|a_n\}\otimes 1_{\mathbf{k}}+ \sum_{i=1}^{n-1} \{a_1 | ... |a_i\} \otimes \{ a_{i+1} |...| a_n\}. $$
Given a left dg $A$-module $(M, \partial_M)$ with action $m: A \otimes M \to M$ define the one sided bar construction $B(A,M)$ to be the chain complex $(BA \otimes M, D_{B(A,M)})$ where
$$D_{B(A,M)}= D_{BA} \otimes \text{id}_M + \text{id}_M \otimes \partial_M + d$$
where $d( \{ a_{1}|...| a_{n} \} \otimes x ) =(-1)^{\epsilon_n} \{ a_{1}|...| a_{n-1} \} \otimes m(a_n \otimes x)$, where $\epsilon_n$ is the sign from above. 

\begin{proposition} If $X \in Top^0$ and $M$ is a left $\mathbf{k}[\pi_1(X)]$-module then there is a quasi-isomorphism of chain complexes $C_*(X,M) \xrightarrow{\simeq} B(\mathbf{\Omega} C^0_*(X), M )$, where $M$ is considered as a $\mathbf{\Omega} C^0_*(X)$-module via the projection map $q$ as in \ref{rho}.
\end{proposition}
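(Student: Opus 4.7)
The plan is to produce an explicit comparison chain map and verify it is a quasi-isomorphism through a filtration argument. Write $C = C^0_*(X)$ and $A = \mathbf{\Omega}C$. The universal twisting cochain $\tau: C \to A$ is adjoint, under the bar-cobar adjunction, to a morphism of dg coalgebras $\eta: C \to BA$ given on an element $c$ by
$$\eta(c) = \sum_{k \geq 1} \{\tau(c_{(1)})\,|\,\tau(c_{(2)})\,|\,\cdots\,|\,\tau(c_{(k)})\},$$
where the sum uses iterated reduced coproducts in Sweedler notation and is finite on each element because $C$ is connected. I define the comparison map to be $\phi := \eta \otimes \mathrm{id}_M$.

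That $\phi$ is a chain map from $C \otimes_\tau M$ to $B(A, M)$ follows from two observations. First, unpacking the definitions shows that $B(A, M)$ coincides with the twisted tensor product $BA \otimes_{\tau^{\mathrm{univ}}_A} M$, where $\tau^{\mathrm{univ}}_A: BA \to A$ is the universal bar-side twisting cochain defined by $\tau^{\mathrm{univ}}_A\{a\} = a$ and vanishing on bar monomials of length other than one. Second, $\tau^{\mathrm{univ}}_A \circ \eta = \tau$ by construction. Combined with the coalgebra-morphism property of $\eta$, these identities force $\phi$ to intertwine the two twisted differentials on the nose.

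For the quasi-isomorphism, I would filter both complexes by the total homological degree of the underlying $C$-content (on the target, read off from the cobar-word description of each bar letter). The twisting perturbation in $\partial_\tau$ and the module-action term $d$ in $B(A,M)$ both strictly lower this filtration, because $\tau$ vanishes on $C_0 \cong \mathbf{k}$ and because the module action through $q$ is concentrated in $A$-degree zero. On the associated graded the two complexes carry their untwisted differentials and $\phi$ reduces to $\eta \otimes \mathrm{id}_M$. The classical fact that $\eta: C \to B\mathbf{\Omega}C$ is a quasi-isomorphism for connected $C$ (bar-cobar duality) then gives an isomorphism on the $E^1$-page. Connectedness of $C$ also guarantees that the filtration is bounded below and degreewise finite, so the spectral sequences converge strongly and $\phi$ itself is a quasi-isomorphism.

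The main obstacle I anticipate is the filtration bookkeeping: one must set up the filtration on the target so that $D_{BA}$ preserves it while the module-action perturbation strictly lowers it, and simultaneously check it is compatible with $\phi$. Everything else — the construction of $\phi$, the chain-map check, and the appeal to classical bar-cobar duality — is standard homological algebra. The essential input throughout is the connectedness of $C^0_*(X)$, guaranteed because $\mathrm{Sing}^0(X,x)$ has a unique $0$-simplex.
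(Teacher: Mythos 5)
Your comparison map is exactly the one the paper uses: your $\eta$ is the adjunction unit $\rho\colon C \to B\mathbf{\Omega}C$, and the identification $B(A,M)\cong BA\otimes_{\tau_A^{\mathrm{univ}}}M$ together with $\tau_A^{\mathrm{univ}}\circ\eta=\tau$ does show that $\eta\otimes\mathrm{id}_M$ is a chain map. The divergence, and the problem, is in your proof that it is a quasi-isomorphism. The filtration by the homological degree of the $C$-content does \emph{not} do what you claim. On the source, $\partial_C\otimes\mathrm{id}_M$ lowers the $C$-degree by exactly $1$, i.e.\ by the same amount as the twisting perturbation (indeed, since $M$ sits in degree $0$ and the action factors through $q$, the twisting term $c\otimes x\mapsto \pm c_{(1)}\otimes m(\tau(c_{(2)})\otimes x)$ is nonzero only when $|c_{(2)}|=1$, so it drops the filtration by exactly $1$ as well). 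Hence the associated graded differential on $C\otimes_\tau M$ is not the ``untwisted differential'' $\partial_C\otimes\mathrm{id}_M$ but zero; worse, since $M$ is concentrated in degree $0$, your filtration on the source coincides with the filtration by total degree, whose spectral sequence carries no information. Similarly on the target the internal-differential part of $d_1$ drops the $C$-content, so the associated graded of $B(A,M)$ carries only the ``quadratic'' part of $D_{BA}$ (the $\Delta$-part of $d_1$ plus $d_2$), not $D_{BA}$ itself. Consequently the step ``$\phi$ reduces to $\eta\otimes\mathrm{id}_M$ on associated graded, now invoke that $\eta$ is a quasi-isomorphism'' is not available: what one would actually need at $E^1$ is the associated-graded (Koszul-complex) computation $H_*(\mathrm{gr}\,B\mathbf{\Omega}C)\cong C$, which is a different (if standard) statement from the ungraded fact you cite. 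Your convergence claim is also off: the $C$-content filtration is bounded below and exhaustive, but not degreewise finite (words $[c_1|\cdots|c_m]$ with all $c_i\in C_1$ have bounded total degree and unbounded $C$-content), so you must appeal to the bounded-below-plus-exhaustive convergence theorem instead.

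The paper avoids all of this: it quotes Proposition 5.1 of \cite{RiZe17} for the explicit quasi-isomorphism $\rho\otimes\mathrm{id}_M$, and offers the cleaner structural argument that both $C\otimes_\tau M$ and $B(\mathbf{\Omega}C,M)$ compute $\mathbf{k}\otimes^{\mathbb{L}}_{\mathbf{\Omega}C}M$, being obtained from the two semifree resolutions $C\otimes_\tau\mathbf{\Omega}C\to\mathbf{k}$ and $B(\mathbf{\Omega}C,\mathbf{\Omega}C)\to\mathbf{k}$ of $\mathbf{k}$ as a right $\mathbf{\Omega}C$-module. If you want a self-contained argument, the most economical repair of your approach is exactly this: show $\rho\otimes\mathrm{id}_{\mathbf{\Omega}C}$ is a quasi-isomorphism between these two semifree resolutions (where filtration arguments do work, because $\mathbf{\Omega}C$ is not concentrated in degree $0$), conclude it is a homotopy equivalence of right $\mathbf{\Omega}C$-modules, and apply $-\otimes_{\mathbf{\Omega}C}M$.
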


\begin{proof} This follows from Proposition 5.1 of \cite{RiZe17} where for any conilpotent dg coalgebra $C$ and any left dg module $M$ over $\mathbf{\Omega} C$ an explicit quasi-isomorphism between $C \otimes_{\tau} M$ and $B(\mathbf{\Omega}C, M)$ is described. The quasi-isomorphism is given by $$\rho \otimes \text{id}_M: C \otimes_{\tau} M \to B \mathbf{\Omega} C \otimes M$$ where $\rho$ is the classical quasi-isomorphism of dg coalgebras $\rho: C \xrightarrow{\simeq}B \mathbf{\Omega} C$. The existence of such a quasi-isomorphism $C \otimes_{\tau} M \simeq B(\mathbf{\Omega}C, M)$ also follows from the following observation: both $B(\mathbf{\Omega}C, M)$ and $C \otimes_{\tau} M$ are chain complexes calculating the derived tensor product $\mathbf{k} \otimes^{\mathbb{L}}_{\mathbf{\Omega C} } M$, where $\mathbf{k}$ is considered as a right dg $\mathbf{\Omega}C$-module via the augmentation $\mathbf{\Omega} C \to \mathbf{k}$. $B(\mathbf{\Omega}C, M)$ is obtained by resolving $\mathbf{k}$ as a right dg $\Omega C$-module via the resolution $B(\mathbf{\Omega}C, \mathbf{\Omega}C) \to \mathbf{k}$ while $C \otimes_{\tau} M$ via the resolution $C \otimes \mathbf{\Omega}C \to \mathbf{k}$. 
\end{proof}

We now state and show the main result of this section.

\begin{proposition} \label{localcoeffs} Let $f: X \to Y$ be a continuous map of path connected pointed spaces such that $\mathbf{C}_*(f): \mathbf{C}_*(X) \to \mathbf{C}_*(Y)$ is an $\mathbf{\Omega}$-quasi-isomorphism. Suppose $M$ is a left $\mathbf{k}[\pi]$-module where $\pi= \pi_1(Y) \cong \pi_1(X)$. Then $C_*(f): C_*(X; M) \to C_*(Y; M)$ is a quasi-isomorphism.
\end{proposition}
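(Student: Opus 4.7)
The plan is to leverage the preceding proposition, which rewrites $C_*(X;M)$ (resp.\ $C_*(Y;M)$) as a one-sided bar construction, and then invoke quasi-isomorphism invariance of the bar construction. Concretely, I would first assemble the naturality square
\begin{equation*}
\begin{CD}
C_*(X;M) @>{\simeq}>> B(\mathbf{\Omega}C^0_*(X),M) \\
@V{C_*(f)}VV @VV{B(\mathbf{\Omega}C^0_*(f),\mathrm{id}_M)}V \\
C_*(Y;M) @>{\simeq}>> B(\mathbf{\Omega}C^0_*(Y),M)
\end{CD}
\end{equation*}
whose horizontal maps are the quasi-isomorphisms $\rho\otimes\mathrm{id}_M$ of the previous proposition; these are natural in the coalgebra, so the square commutes. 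Thus it suffices to show the right-hand vertical map is a quasi-isomorphism.

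Before doing so, I need to verify that the square even makes sense, i.e.\ that $M$ carries compatible $\mathbf{\Omega}C^0_*(X)$- and $\mathbf{\Omega}C^0_*(Y)$-module structures. The $\mathbf{\Omega}C^0_*(X)$-action factors as $\mathbf{\Omega}C^0_*(X)\twoheadrightarrow H_0(\mathbf{\Omega}C^0_*(X))\cong\mathbf{k}[\pi_1(X)]\to\mathrm{End}(M)$, and similarly for $Y$. By Proposition \ref{fundgrpiso} and the naturality of the isomorphism $H_0(\mathbf{\Omega}C^0_*(-))\cong\mathbf{k}[\pi_1(-)]$, the induced map $\mathbf{k}[\pi_1(X)]\to\mathbf{k}[\pi_1(Y)]$ is the isomorphism coming from $\pi_1(f)$, and the $\mathbf{k}[\pi]$-module structure on $M$ is transported accordingly; hence $M$ is canonically an $\mathbf{\Omega}C^0_*(X)$-module via restriction along $\mathbf{\Omega}C^0_*(f)$, making the morphism of bar constructions well-defined.

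The remaining, and main, step is the following standard fact: if $\varphi\colon A\to A'$ is a quasi-isomorphism of augmented dg algebras and $N$ is a dg $A'$-module, then $B(\varphi,\mathrm{id}_N)\colon B(A,N)\to B(A',N)$ is a quasi-isomorphism. I would prove this by filtering both bar complexes by tensor length, $F_pB(A,N)=\bigoplus_{k\le p}(s\overline{A})^{\otimes k}\otimes N$. Since $s\overline{A}$ is concentrated in degrees $\ge 1$ (as $\mathbf{\Omega}C^0_*$ lives in non-negative degrees and $\overline{A}$ sits in degrees $\ge 0$, so $s\overline{A}$ in degrees $\ge 1$), and $N=M$ is concentrated in degree $0$, this filtration is bounded in each total degree, so the associated spectral sequence converges strongly. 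The $E^0$-page is $(s\overline{A})^{\otimes p}\otimes N$ with internal differential only; $\varphi$ induces an isomorphism on $E^1=(sH_*(\overline{A}))^{\otimes p}\otimes N$, hence on $E^\infty$, hence on homology of the total complex.

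The main potential obstacle is the compatibility bookkeeping in the second paragraph: one must confirm that the identification of the two $\mathbf{k}[\pi]$-module structures on $M$ genuinely coincides with the structure obtained by restriction along $\mathbf{\Omega}C^0_*(f)$, which ultimately rests on the explicit description of the isomorphism $\psi$ from Section 3 and its functoriality. Once that is in place, the convergent spectral sequence argument is routine and yields the desired quasi-isomorphism.
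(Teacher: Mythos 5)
Your argument is correct and is essentially the paper's proof: the same commutative square with horizontal maps $\rho\otimes\mathrm{id}_M$ reduces everything to quasi-isomorphism invariance of the one-sided bar construction, which the paper simply cites (Proposition 2.2.4 of \cite{LoVa12}) while you spell out the length-filtration spectral sequence. The one caveat is that over a general commutative ring $\mathbf{k}$ the $E^1$-page is not literally $(sH_*(\overline{A}))^{\otimes p}\otimes N$ (K\"unneth fails for arbitrary $M$); the correct statement is that $\varphi^{\otimes p}\otimes\mathrm{id}_N$ is still a quasi-isomorphism on the associated graded because $s\overline{A}$ and $s\overline{A'}$ are bounded-below complexes of free $\mathbf{k}$-modules, so the cone of $\varphi$ is acyclic and degreewise free, hence remains acyclic after tensoring.
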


\begin{proof} Since $\mathbf{C}_*(f)$ is an $\mathbf{\Omega}$-quasi-isomorphism then $\mathbf{\Omega} C^0_*(f) : \mathbf{\Omega}C^0_*(X) \to \mathbf{\Omega}C^0_*(Y)$ is a quasi-isomorphism of dg algebras. The map $\mathbf{\Omega} C^0_*(f)$ induces a quasi-isomorphism of chain complexes $B\mathbf{\Omega} C^0_*(f): B(\mathbf{\Omega} C^0_*(X), M ) \to B(\mathbf{\Omega} C^0_*(Y), M )$. This follows from the quasi-isomorphism invariance of the bar construction, a well known fact which can be proven by a standard spectral sequence argument as in Proposition 2.2.4 of \cite{LoVa12}. The result now follows from the commutative diagram

\[
\xymatrix{
C^0_*(X) \otimes_{\tau} M \ar[r]^-{\rho \otimes \text{id}_M} \ar[d]_{C^0_*(f) \otimes \text{id}_M }& B(\mathbf{\Omega} C^0_*(X), M) \ar[d]^{B\mathbf{\Omega} C^0_*(f)} \\
C^0_*(Y) \otimes_{\tau} M  \ar[r]^-{\rho \otimes \text{id}_M} & B(\mathbf{\Omega}C^0_*(Y),M).
}
\]

\end{proof}

\section{Whitehead's Theorem}

In this section we recall the homology version of Whitehead's Theorem which we will need to prove our main theorem. Whitehead's Theorem roughly states that a map of topological spaces is a weak equivalence if and only if  the map induces an isomorphism on fundamental groups and further induces an isomorphism on all homology groups with respect to a certain local system. This version of Whitehead's Theorem can be found as Theorem 6.71 in \cite{DaKi01}, because Davis and Kirk only include a sketch of the proof of this theorem, we include for completeness an explanation of how this theorem follows from the usual Whitehead Theorem. 

\begin{theorem}\label{whitehead}
Let $f:X \rightarrow Y$ be a continuous  map of connected pointed topological spaces. The map $f$ is a weak homotopy equivalence if and only if the following conditions hold
\begin{enumerate}
\item The induced map $f_*:\pi_1(X) \rightarrow \pi_1(Y)$ is an isomorphism.
\item The induced map $f_*:H_n(X;\Z[\pi]) \rightarrow H_n(Y;\Z[\pi])$ on homology with coefficients in the local system $\Z[\pi]$, where $\pi= \pi_1(Y) \cong \pi_1(X)$, is an isomorphism for all $n>0$. 
\end{enumerate} 
\end{theorem}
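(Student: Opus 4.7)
The plan is to reduce the statement to the classical simply-connected Whitehead theorem by passing to universal covers. The central identification I will exploit is already implicit in Section 4: for any $X \in Top^0$ with universal cover $p_X : \tilde X \to X$, the local coefficient complex satisfies
$$C_*(X; \Z[\pi]) \;\simeq\; C_*(\tilde X) \otimes_{\Z[\pi]} \Z[\pi] \;\cong\; C_*(\tilde X)$$
(Theorem 5.8 of \cite{DaKi01}), so condition (2) is equivalent to a statement about ordinary integral singular homology of universal covers.

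For the ``only if'' direction, if $f$ is a weak homotopy equivalence then $\pi_1(f)$ is evidently an isomorphism, and we may identify $\pi_1(X) \cong \pi_1(Y) = \pi$. Choosing basepoint lifts produces a map $\tilde f : \tilde X \to \tilde Y$, equivariant with respect to $\pi_1(f)$; since covering projections induce isomorphisms on $\pi_n$ for $n \geq 2$ and $\tilde X, \tilde Y$ are simply connected, $\tilde f$ is itself a weak equivalence, hence induces an isomorphism on integral singular homology. Via the identification above this gives condition (2).

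The substantive direction is the converse. Assume (1) and (2). Using (1) to identify $\pi_1(X) \cong \pi_1(Y) = \pi$ and the simple connectedness of $\tilde X$, the lifting criterion produces a unique basepoint-preserving lift $\tilde f : \tilde X \to \tilde Y$ of $f$, equivariant with respect to $\pi_1(f)$. Under the identifications $H_*(X; \Z[\pi]) \cong H_*(\tilde X)$ and $H_*(Y; \Z[\pi]) \cong H_*(\tilde Y)$, hypothesis (2) translates into the statement that $\tilde f_* : H_n(\tilde X) \to H_n(\tilde Y)$ is an isomorphism for all $n > 0$; for $n = 0$ this also holds since both spaces are path connected. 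As $\tilde X$ and $\tilde Y$ are simply connected, the classical simply-connected Whitehead theorem (obtained by combining Hurewicz, the relative Hurewicz theorem applied to the mapping cylinder pair, and the long exact sequences) implies $\tilde f$ is a weak homotopy equivalence. Hence $\pi_n(f) \cong \pi_n(\tilde f)$ is an isomorphism for $n \geq 2$; combined with (1), this shows $f$ is a weak homotopy equivalence.

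The only real obstacle is the careful bookkeeping of the identification $C_*(X; \Z[\pi]) \simeq C_*(\tilde X)$ with the correct $\pi$-equivariance, and the compatibility of $\tilde f$ with this identification under the chosen basepoint lifts. These are standard facts of covering space theory (compare \cite{Br59}, \cite{DaKi01}), so the remainder of the argument is a direct appeal to the simply-connected Whitehead theorem and to the covering space isomorphism $\pi_n(\tilde X) \cong \pi_n(X)$ for $n \geq 2$.
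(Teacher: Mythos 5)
Your proposal is correct and follows essentially the same route as the paper's own proof: pass to universal covers, identify $H_n(X;\Z[\pi])$ with $H_n(\tilde X)$, lift $f$ to $\tilde f$ via the covering-space lifting criterion, and apply the simply connected Whitehead theorem together with $\pi_n(\tilde X)\cong\pi_n(X)$ for $n\geq 2$. The only cosmetic difference is that you sketch how the simply connected Whitehead theorem itself is proved, whereas the paper simply cites Corollary 6.69 of \cite{DaKi01}.
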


\begin{proof}
We prove the theorem by first passing to the universal covers of $X$ and $Y$ and then by using the simply connected homology version of Whitehead's Theorem.

Since we assumed that all the spaces in this paper are connected, locally path connected, and semi-locally simply connected the universal covers of $X$ and $Y$ exist. We denote these universal covers by $\tilde{X}$ and $\tilde{Y}$ and the covering projections by $p_X:\tilde{X} \rightarrow X$ and $p_Y:\tilde{Y} \rightarrow Y$. By the classical lifting criterion for covering spaces we may find a lift $\tilde{f}:\tilde{X} \rightarrow \tilde{Y}$ such that the following diagram commutes
\[
\xymatrix{
\tilde{X} \ar@{-->}[r]^{\tilde{f}} \ar[d]^{p_X}& \tilde{Y} \ar[d]^{p_Y} \\
X \ar[r]^{f} & Y.
}
\]

If  $f$ is a weak homotopy equivalence, then $f_*:\pi_1(X) \rightarrow \pi_1(Y)$ is an isomorphism. Since $f$ is a weak homotopy equivalence it also follows that the lift $\tilde{f}$ is a weak homotopy equivalence and so it induces an isomorphism $\tilde{f}_*: H_*(\tilde{X}) \rightarrow H_*(\tilde{Y})$. As is explained in Section 5.2 of \cite{DaKi01}, $H_n(X;\Z[\pi])$, the homology of the local system $\Z[\pi]$ is isomorphic to $H_n(\tilde{X})$, the homology of the universal cover. This fact may also be deduced from the main result of \cite{Br59}. Since the map $\tilde{f}$ is a weak equivalence, it induces an isomorphism in homology of the universal covers. The map $f$ therefore induces an isomorphism between $H_n(X;\Z[\pi])$ and $H_n(Y;\Z[\pi])$, which proves the first implication of the theorem.

To prove the converse, assume $f$ induces an isomorphism on fundamental groups and on homology with coefficients in the local system $\Z[\pi]$. Since $f$ induces an isomorphism on local homology, the induced map $\tilde{f}_*:H_*(\tilde{X}) \rightarrow H_*(\tilde{Y})$ is an isomorphism. We now use the simply connected version of Whitehead's Theorem (Corollary 6.69 in \cite{DaKi01}), which states that a map between simply connected spaces which induces an isomorphism in homology is a weak homotopy equivalence. This implies that $\tilde{f}$ is a weak homotopy equivalence so the universal covers $\tilde{X}$ and $\tilde{Y}$ are therefore weakly equivalent. The map $f$ therefore induces  isomorphisms  $f_*:\pi_n(X)\rightarrow \pi_n(Y)$ for all $n \geq 2$. Since we further assumed that the map $f_*:\pi_1(X) \rightarrow \pi_1(Y)$ is also an isomorphism, the map $f_*$ induces isomorphisms on all homotopy groups for $n \geq 1$ and is therefore a weak homotopy equivalence.

\end{proof}

\section{Main theorem}

We put all the pieces together to prove our main theorem. We now work over $\mathbf{k}= \mathbb{Z}$. Recall that $\mathbf{C}_*(X)$ denotes the connected $\rchi$-coalgebra of singular chains on a pointed path connected space $X$ and $C^0(X)$ denotes its underlying connected dg coassociative coalgebra with coproduct given by the Alexander-Whitney diagonal approximation. 

\begin{theorem} A continuous map of path connected pointed spaces $f: X \to Y$ is a weak homotopy equivalence if and only if $\mathbf{C}_*(f): \mathbf{C}_*(X) \to \mathbf{C}_*(Y)$ is an $\mathbf{\Omega}$-quasi-isomorphism of $\rchi$-coalgebras; namely, if the map of dg algebras $\mathbf{\Omega} C^0_*(f): \mathbf{\Omega} C^0_*(X) \to \mathbf{\Omega} C^0_*(Y)$ induces an isomorphism on homology. 
\end{theorem}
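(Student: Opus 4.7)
The plan is to split into the two implications. The forward direction, that a weak homotopy equivalence $f$ gives rise to an $\mathbf{\Omega}$-quasi-isomorphism $\mathbf{C}_*(f)$, is already built in: it is precisely part (2) of Theorem \ref{pi1andchains} (quoted from \cite{RiZe18}). So the real work is the converse, and the strategy will be to feed the hypothesis into the version of Whitehead's Theorem recalled in Theorem \ref{whitehead}.

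For the converse, assume $\mathbf{C}_*(f)$ is an $\mathbf{\Omega}$-quasi-isomorphism. First I would invoke Proposition \ref{fundgrpiso}, which gives immediately that $\pi_1(f):\pi_1(X)\to\pi_1(Y)$ is an isomorphism of groups; denote the common group by $\pi$ and set $M=\mathbb{Z}[\pi]$, viewed as a left $\mathbb{Z}[\pi]$-module over itself. Note that since $\pi_1(X) \cong \pi_1(Y) \cong \pi$, the module $M$ has a canonical structure over each fundamental group ring, so the local coefficient systems on $X$ and on $Y$ are identified through $f$, and $C_*(f)$ is a map of twisted tensor products $C^0_*(X)\otimes_\tau M \to C^0_*(Y)\otimes_\tau M$. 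Second, I would apply Proposition \ref{localcoeffs} to this $M$ to conclude that $C_*(f): C_*(X; M)\to C_*(Y; M)$ is a quasi-isomorphism, i.e.\ $f_*: H_n(X;\mathbb{Z}[\pi])\to H_n(Y;\mathbb{Z}[\pi])$ is an isomorphism for all $n$. Both hypotheses of Theorem \ref{whitehead} being now in place, $f$ is a weak homotopy equivalence.

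In writing this up I would emphasize one small compatibility point: the identification of the $\mathbb{Z}[\pi]$-module structures on $M$ used on the $X$-side and the $Y$-side must match the one coming from $\pi_1(f)$. Concretely, the action of $\mathbf{\Omega}C^0_*(X)$ on $M$ used in Proposition \ref{localcoeffs} factors through the projection $q$ of \ref{rho} and the isomorphism $\psi$ of \ref{iso}, and on the $Y$-side through the analogous $q$ and $\psi$; the fact that these two recipes are intertwined by $\mathbf{\Omega}C^0_*(f)$ is exactly the naturality of $\psi$, which is built into the description of $\psi$ on length-one monomials $[\sigma]\mapsto[\sigma]-e$. So the diagram used at the end of the proof of Proposition \ref{localcoeffs} is genuinely a diagram of $\mathbf{\Omega}C^0_*(X)$-module maps.

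No single step looks like a real obstacle, since every ingredient has been assembled in Sections 3, 4, and 5; the only thing that requires care is making sure the module structures on $M$ used on the two sides are identified via $\pi_1(f)$ so that Proposition \ref{localcoeffs} applies verbatim. Once that bookkeeping is done, the theorem is a direct assembly: Proposition \ref{fundgrpiso} supplies condition (1) of Theorem \ref{whitehead}, Proposition \ref{localcoeffs} supplies condition (2), Theorem \ref{whitehead} concludes weak equivalence, and part (2) of Theorem \ref{pi1andchains} handles the forward implication.
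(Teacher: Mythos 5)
Your proposal is correct and follows essentially the same route as the paper: the forward implication via Theorem \ref{pi1andchains}, and the converse by combining Proposition \ref{fundgrpiso}, Proposition \ref{localcoeffs} with $M=\mathbb{Z}[\pi]$, and Theorem \ref{whitehead}. The extra care you take with identifying the module structures via $\pi_1(f)$ is a reasonable point the paper leaves implicit, but it does not change the argument.
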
 
\begin{proof} If $f: X \to Y$ is a weak homotopy equivalence then it follows from 1) and 2) of Theorem \ref{pi1andchains} that $\mathbf{C}_*(f): \mathbf{C}_*(X) \to \mathbf{C}_*(Y)$ is an $\mathbf{\Omega}$-quasi-isomorphism of $\rchi$-coalgebras.
Now for the converse assume that $f: X \to Y$ is a map such that $\mathbf{C}_*(f): \mathbf{C}_*(X) \to \mathbf{C}_*(Y)$ is an $\mathbf{\Omega}$-quasi-isomorphism of $\rchi$-coalgebras. By Proposition \ref{fundgrpiso}, it follows that $\pi_1(f): \pi_1(X) \to \pi_1(Y)= \pi$ is an isomorphism of groups. By Proposition \ref{localcoeffs}, it follows that $C_*(f): C_*(X;M) \to C_*(Y;M)$ is a quasi-isomorphism for any left $\mathbf{k} [\pi]$-module $M$. In particular, we have that for $M= \mathbf{\mathbb{Z} }[\pi]$, $C_*(f): C_*(X; \mathbb{Z}[\pi]) \to C_*(Y; \mathbb{Z}[\pi])$ is a quasi-isomorphism. Thus, $H_n(f): H_n(X; \mathbb{Z}[\pi]) \to H_n(Y; \mathbb{Z}[\pi])$ is an isomorphism for all $n>0$. It follows from Theorem \ref{whitehead} that $f: X \to Y$ is a weak homotopy equivalence.
\end{proof}

\bibliographystyle{plain}

 \Addresses

\end{document}